\newtheorem{theorem}{Theorem}[section]
\newtheorem{lem}[theorem]{Lemma}
\newtheorem{prop}[theorem]{Proposition}
\theoremstyle{definition}
\newtheorem{definition}[theorem]{Definition}
\newtheorem{example}[theorem]{Example}
\theoremstyle{remark}
\numberwithin{equation}{section}
\newcommand{\N}{\mathbb{N}}
\newcommand{\Z}{\mathbb{Z}}
\newcommand{\ra}{\rightarrow}
\newcommand{\U}{\mathcal{U}}
\newcommand{\bm}{\mathfrak{b}}
\newcommand{\hB}{\hat{\partial}}
\newcommand{\C}{\mathcal{C}}
\newcommand{\R}{\mathbb{R}}
\begin{document}

\title{Spanning Tree bounds for grid graphs}

\author {Kristopher Tapp}
\email{ktapp@sju.edu}
\begin{abstract}
Among subgraphs with a fixed number of vertices of the regular square lattice, we prove inequalities that essentially say that those with smaller boundaries have larger numbers of spanning trees and vice-versa.  As an application, we relate two commonly used measurements of the compactness of district maps.
\end{abstract}
\maketitle

\section{Introduction}
For a finite connected graph $G$, let $\tau(G)$ denote its number of spanning trees.  The study of this measurement goes back to Kirchoff's Matrix-Tree Theorem, which equates it with the product of the non-zero eigenvalues of the Laplacian of $G$~\cite{Kir}.

Let $\mathcal{L}(\Z^2)$ denote the regular square lattice, which has vertex set $\Z^2$ and rook-adjacent edges.  We are interested here in \emph{grid graphs}, by which we mean finite connected subgraphs of $\mathcal{L}(\Z^2)$.  The \emph{bulk limit} of $\mathcal{L}(\Z^2)$ is known to equal $\frac{4C}{\pi}$, where $C$ is Catalan's constant.  This means that
\begin{equation}\label{E:Bulklim}\lim_{k\ra\infty} \frac{\ln(\tau(G(k)))}{|V(G(k))|} = \frac{4C}{\pi}\approx 1.166243,
\end{equation}
where $G(1)\subset G(2)\subset\cdots$ is any nested sequence of grid graphs (satisfying certain weak hypotheses) whose union equals $\mathcal{L}(\Z^2)$.  In this paper, $V(G)$ and $E(G)$ will denote the vertex set and edge set of a graph $G$.  For this theorem and analogous results for other lattices, see~ \cite{BP}, \cite{Shrock}, \cite{Temp}, \cite{Teufl}, \cite{Wu} and references therein.

There is a good intuition that, among grid graphs with a fixed number of vertices (or even among more general classes of graphs), the ones with higher numbers of spanning trees should have smaller boundaries and vice-versa.  Asymptotic evidence for this intuition is found in~\cite{Kenyon}.

Further evidence comes from recent work on the mathematics of redistricting.  For the reversible version of the spanning-tree-based MCMC algorithm by which ensembles of maps are commonly generated, the stationary distribution is known to assign a probability to each map that is proportional to the map's \emph{spanning tree score} (which means the product of the numbers of spanning trees of its districts); see~\cite{revRecom}, \cite{Mergesplit}, \cite{Recom}.  Thus, the algorithm prefers maps whose districts have larger numbers of spanning trees.  Data from large ensembles of maps indicate a strong negative correlation between a map's spanning tree score and its number of cut edges (which is a discrete measurement of the total size of the district boundaries)~\cite{revRecom}.  Thus, the algorithm seems to prefer maps whose districts have small boundaries, and one purpose of this paper is to more rigorously understand this behavior.

The following important result, due to Russell Lyons, says that the bulk limit is an upper bound:
\begin{theorem}[Lyons]\label{T:Lyons} If $G$ is a grid graph, then
$$\ln(\tau(G)) < \frac{4C}{\pi}\cdot |V(G)|.$$
\end{theorem}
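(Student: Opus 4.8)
The plan is to route through the Matrix--Tree Theorem, rewrite the resulting log-determinant as a random-walk series, and then compare that series term by term with the corresponding series for $\mathcal L(\Z^2)$, letting the degree deficit at the boundary of $G$ supply the slack. I may assume $|V(G)|\ge 2$, the one-vertex case being trivial. Let $A_G,D_G$ be the adjacency and degree matrices of $G$, and $\hat P_G=D_G^{-1/2}A_GD_G^{-1/2}$, which is symmetric with spectrum in $[-1,1]$ and simple top eigenvalue $1$ (as $G$ is connected). A short computation with the reduced Laplacian $D_G-A_G$, using $D_G^{-1/2}(D_G-A_G)D_G^{-1/2}=I-\hat P_G$ and the all-minors expansion, gives
\[
\ln\tau(G)=\sum_{v\in V(G)}\ln\deg_G(v)-\ln\bigl(2|E(G)|\bigr)+\sum_{\mu\neq 1}\ln(1-\mu),
\]
the last sum running over the eigenvalues $\mu\neq1$ of $\hat P_G$. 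Expanding $\ln(1-\mu)=-\sum_{k\ge1}\mu^k/k$ and using $\operatorname{tr}(\hat P_G^{\,k})=\sum_{v}\Prob_v^G[X_k=v]$, where $X$ is simple random walk on $G$, turns this into
\[
\ln\tau(G)=\sum_{v\in V(G)}\ln\deg_G(v)-\ln\bigl(2|E(G)|\bigr)-\sum_{k\ge1}\frac1k\Bigl(\sum_{v\in V(G)}\Prob_v^G[X_k=v]-1\Bigr),
\]
the series converging at its radius because $G$ is finite and bipartite, so $\operatorname{tr}(\hat P_G^{\,k})\to 1+(-1)^k$.

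Running the same computation on a large torus and comparing with \eqref{E:Bulklim} yields the companion identity $\tfrac{4C}{\pi}=\ln 4-\sum_{k\ge1}\tfrac1k\,p_k$, where $p_k:=\Prob_0^{\mathcal L(\Z^2)}[X_k=0]$. Subtracting,
\[
\tfrac{4C}{\pi}\,|V(G)|-\ln\tau(G)=\sum_{v\in V(G)}\ln\frac{4}{\deg_G(v)}+\ln\bigl(2|E(G)|\bigr)+T,\qquad
T:=\sum_{k\ge1}\frac1k\Bigl(\sum_{v\in V(G)}\bigl(\Prob_v^G[X_k=v]-p_k\bigr)-1\Bigr).
\]
The first two terms on the right are $\ge 0$ and $>0$, so it suffices to bound $T$ below by minus their sum. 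The structural input I would prove is a monotonicity lemma: \emph{for every finite connected $G\subseteq\mathcal L(\Z^2)$, every $v\in V(G)$, and every $k$, one has $\Prob_v^G[X_k=v]\ge p_k$} (a confined walk returns at least as often). I would deduce this from an exhaustion $G\subseteq\cdots\subseteq Q_M\uparrow\mathcal L(\Z^2)$ by boxes---noting that for fixed $k$ the value $\Prob_v^{Q_M}[X_k=v]$ is eventually \emph{equal} to $p_k$, once $Q_M$ contains the $k$-ball about $v$---thereby reducing everything to the claim that deleting a single non-cut vertex (with its incident edges) from a finite graph never decreases a return probability at a surviving vertex, which I would establish by a weighted walk-counting argument: closed walks avoiding the deleted vertex can only gain weight, since degrees only drop, and this gain must be shown to outweigh the closed walks that used it.

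Granting the lemma, $\sum_v(\Prob_v^G[X_k=v]-p_k)\ge 0$ for every $k$, so the only threat in $T$ is the $-1$ in each summand, whose sum against $1/k$ is divergent. Controlling this is the heart of the matter: one must show the surplus $\sum_v(\Prob_v^G[X_k=v]-p_k)$ is at least $1$ for all but an $O(\log|V(G)|)$-amount (weighted by $1/k$) of indices $k$, uniformly over $G$. The mechanism is that for $k$ below $\operatorname{dist}(v,\partial G)^2$ the vertex $v$ contributes exactly $p_k$, so the surplus comes only from the $\Theta(|\partial G|)$ near-boundary vertices, where it is genuinely positive; for $k\gg|V(G)|$ the inner sum tends to $1+(-1)^k$ and absorbs the $-1$; and the intermediate range costs at worst $\sum_{k\le O(|V(G)|)}1/k=O(\log|V(G)|)$, which is dominated by the $\ln(2|E(G)|)$ already on the right (and, where necessary, by the strictly positive degree term, which is never zero because a finite subgraph always has a vertex of degree $<4$). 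Assembling these gives $T>-\sum_v\ln\frac{4}{\deg_G(v)}-\ln(2|E(G)|)$, hence the strict inequality.

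The main obstacle is exactly the term $T$: the monotonicity lemma pins down its \emph{sign} cleanly but says nothing about its \emph{size}, and the delicate point is to prove---uniformly over \emph{all} grid graphs, in particular long thin ones and ones with very rough boundary---that the accumulated near-boundary return surplus beats the logarithmically divergent contribution of the $-1$'s. The lemma itself is also not automatic: the obvious coupling compares $G$ only with the walk on $\mathcal L(\Z^2)$ \emph{killed} upon leaving $V(G)$, whose return probability is strictly below $p_k$, so one really has to account for the reflected excursions that the induced-subgraph walk performs at $\partial G$.
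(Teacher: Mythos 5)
Your outline is a plan rather than a proof: its two load-bearing claims are exactly the ones you flag as open, and neither is routine. First, the monotonicity lemma $\Prob_v^G[X_k=v]\ge p_k$ is not established. The reduction you propose---that deleting a single non-cut vertex never decreases a return probability at a surviving vertex---is itself a delicate statement about reversible chains, not a walk-counting triviality: removing a vertex destroys all closed walks through it while only partially boosting the remaining ones via lowered degrees, and you give no mechanism forcing the trade to go your way uniformly in $k$. As you note, the natural coupling compares $G$ with the walk on $\mathcal{L}(\Z^2)$ killed on exiting $V(G)$, which points in the \emph{wrong} direction, so something genuinely new is needed here. Second, and more seriously, even granting the lemma, the control of $T$ is asserted, not proved. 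The claim that the near-boundary return surplus is at least $1$ for all $k$ outside a window of harmonic weight $O(\log|V(G)|)$ is unquantified; and the comparison you then invoke, ``$\sum_{k\le O(|V(G)|)}1/k$ is dominated by $\ln(2|E(G)|)$,'' is a same-order comparison ($\ln|V(G)|+O(1)$ versus $\ln|V(G)|+O(1)$) in which the unspecified constants decide the sign, with no uniformity argument over long thin or rough-boundary graphs. There is also a smaller unaddressed point: for odd $k$ bipartiteness gives $\operatorname{tr}(\hat P_G^{\,k})=0$ and $p_k=0$, so those terms each contribute $-1/k$ and must be cancelled against the even terms by a conditional-summation argument you do not set up. Until these estimates are actually carried out, the strict inequality is not obtained.

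For contrast, the paper avoids all of this analysis. It chains together $2n+1$ translated copies of $G$ (joined by single edges between a rightmost and a leftmost vertex) to get grid graphs $H_n$ with $\ln\tau(H_n)/|V(H_n)|=\ln\tau(G)/|V(G)|$ exactly, identifies the $n\to\infty$ limit with the tree entropy of the random rooted limit graph $H_\infty$ via \cite[Theorem 3.2]{Lyons_pre}, and then uses strict monotonicity of tree entropy under stochastic domination by $\mathcal{L}(\Z^2)$ from \cite[Theorem 3.2]{Lyons}, whose tree entropy is $4C/\pi$. If you want to pursue your spectral route, you would in effect be reproving a quantitative version of those tree-entropy results; the pointwise return-probability domination and the uniform boundary-surplus estimate are the places where your argument currently has no proof.
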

For completeness, we'll include Lyons' unpublished proof of this theorem in the next section.  In terms of the base
\begin{equation*}\mathfrak{b}=\exp(4C/\pi)\approx 3.2099,
\end{equation*}
Lyons' theorem can be re-phrased as:
\begin{equation}\label{E:Lyons_rephrase}
\tau(G)<\bm^{|V(G)|}.
\end{equation}

Our main result is related to this, and is easiest to state for the following natural class of grid graphs.

\begin{definition}\label{D:simple} A \emph{simple closed loop $\alpha$ in} $\mathcal{L}(\Z^2)$ is a sequence of more than two distinct vertices where each successive pair (including the pair consisting of the first and the last) is an edge of $\mathcal{L}(\Z^2)$.  A grid graph $G$ is called \emph{simple} if it is comprised of all of the vertices and edges that are on and interior to a simple closed loop $\alpha$ in $\mathcal{L}(\Z^2)$.  In this case, the set vertices of $\alpha$ is called the \emph{boundary} of $G$, denoted $\partial G$.  The \emph{area of $G$}, denoted $\text{Area}(G)$, means the area of the interior of $\alpha$, or equivalently the number of faces of $G$.
\end{definition}

\begin{figure}[ht!]\centering
   \scalebox{1.5}{\includegraphics{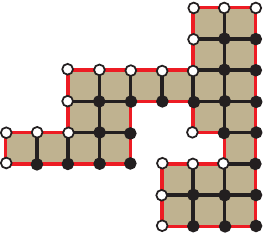}}
\caption{A simple grid graph.  The white vertices lie in its \emph{top-left boundary.}}\label{F:GridGraph}
   \end{figure}

Figure~\ref{F:GridGraph} illustrates a simple grid graph $G$.  Its \emph{bounding loop} $\alpha$, colored red, can be considered as a piecewise-linear path in $\R^2$ whose length equals $|\partial G|$.  The white vertices comprise the \emph{top-left boundary} of $G$, defined as:
\begin{definition}\label{D:simple_Special} The \emph{top-left boundary} of a simple grid graph $G$, denoted $\hB G$, is the set of all $v\in\partial G$ such that the face of $\mathcal{L}(\Z^2)$ whose bottom-right corner is $v$ is not a face of $G$.
\end{definition}

Our main result is the following theorem, which forces graphs with larger boundaries have smaller numbers of spanning trees and vice-versa:

\begin{theorem}\label{T:main} If $G$ is a simple grid graph, then
$$\bm^m\leq \tau(G)\leq\ 4^m,$$
where $m = \text{Area}(G) = |V(G)|-\frac 12 |\partial G| - 1 = |V(G)|-|\hB G|$.
\end{theorem}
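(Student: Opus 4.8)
The plan is to pass to the planar dual. For any connected plane graph $H$ one has $\tau(H)=\tau(H^{*})$, via the bijection $E(T)\mapsto E(H)\setminus E(T)$ between spanning trees of $H$ and of $H^{*}$; for a simple grid graph $G$ this gives $\tau(G)=\tau(G^{*})$, where $G^{*}$ has one vertex for each of the $m$ bounded faces of $G$ --- each such face is a unit square, so its vertex has degree exactly $4$ --- together with one vertex $v_{\infty}$ for the outer face. The two expressions for $m$ then come along for free: $m=\text{Area}(G)=|V(G)|-\tfrac12|\partial G|-1$ is Pick's theorem, since the interior of the bounding loop contains $|V(G)|-|\partial G|$ lattice points; and $m=|V(G)|-|\hB G|$ follows from the bijection sending a bounded face $f$ to its bottom-right corner, which lands in $V(G)\setminus\hB G$. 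It is worth noting that $G^{*}$ is exactly the \emph{wired polyomino} associated to the set $S$ of faces of $G$: the induced subgraph of the (self-)dual lattice on $S$, with everything outside $S$ contracted to $v_{\infty}$, and $|S|=m$.

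For the upper bound I would root every spanning tree $\mathcal T$ of $G^{*}$ at $v_{\infty}$ and record, for each of the $m$ bounded-face vertices $f$, the first edge on the path from $f$ to $v_{\infty}$ in $\mathcal T$; this is one of the $4$ edges incident to $f$. The record determines $\mathcal T$ --- its $m$ edges are precisely the recorded ones --- so the assignment is injective and $\tau(G)=\tau(G^{*})\le 4^{m}$.

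For the lower bound, the inequality becomes $\tau(\text{wired polyomino on }m\text{ cells})\ge\bm^{\,m}$, a ``wired'' counterpart of Lyons' Theorem~\ref{T:Lyons}, and I would prove it by adapting the argument for Theorem~\ref{T:Lyons} reproduced in the next section. A first check, which also shows why a purely local induction is not enough, is to build $G$ up one face at a time: each step attaches a face $f$ to a simple grid graph $G'$ along $j$ of the edges of $\partial G'$. When $j=1$, deleting $f$ removes a length-$3$ path placed in parallel with a single edge $pq$ of $G'$, and a series--parallel reduction gives $\tau(G)=3\,\tau(G'-pq)+4\,\tau(G'/pq)=3\,\tau(G')+\tau(G'/pq)$. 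Since $p$ and $q$ are lattice-adjacent, Rayleigh monotonicity against $\mathcal L(\Z^{2})$ gives $\Prob_{G'}[pq\in T]=R_{\mathrm{eff}}^{G'}(p,q)\ge R_{\mathrm{eff}}^{\mathcal L(\Z^{2})}(p,q)=\tfrac12$, hence $\tau(G'/pq)\ge\tfrac12\tau(G')$ and $\tau(G)\ge\tfrac72\tau(G')>\bm\,\tau(G')$.

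The main obstacle is the cases $j=2$ and $j=3$, which cannot be avoided (rectangles and the $2\times2$ block have no face attached along a single edge). There the analogous identities read $\tau(G)=\tau(G')(2+R_{\mathrm{eff}}^{G'}(p,r))$ with $p,r$ lattice-diagonal, and $\tau(G)=\tau(G')(1+R_{\mathrm{eff}}^{G'}(t,p))$, and the crude bounds $R_{\mathrm{eff}}^{G'}\ge R_{\mathrm{eff}}^{\mathcal L(\Z^{2})}$ are worthless: for a ``fat'' region $G'$ the effective resistance decays toward its lattice value ($2/\pi$, resp.\ $\tfrac12$), so the per-face factor drops below $\bm$, and no induction carrying only $\tau(G')\ge\bm^{\,m-1}$ as hypothesis can close. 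What rescues the bound is that a fat $G'$ already has $\tau(G')$ far above $\bm^{\,\text{Area}(G')}$, by an amount that grows with the boundary; quantifying this surplus uniformly over all shapes --- equivalently, showing that the per-cell growth rate of wired polyominoes meets the bulk limit $4C/\pi$ from above --- is exactly what the Lyons-style estimate must supply, and is the heart of the proof.
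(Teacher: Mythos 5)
Your dual-graph argument for the upper bound is correct and is genuinely different from the paper's: the paper proves $\tau(G)\le 4^m$ by building $G$ one vertex at a time in words-on-a-page order and showing that the per-vertex multiplier $m_v=\tau(H_v)/\tau(H'_v)$ equals $1$ when $v\in\hB G$ (this is where simplicity is used) and is at most $4$ otherwise, the latter via Rayleigh comparison with a small fixed graph $\U(2)$; your route -- $\tau(G)=\tau(G^*)$, root spanning trees of $G^*$ at the outer vertex, and injectively record each face-vertex's parent edge among its $4$ incident dual edges -- is shorter and more elementary, and your treatment of the three expressions for $m$ (Pick plus the bottom-right-corner bijection) matches the paper's.

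The lower bound, however, is a genuine gap, and you say so yourself: the estimate you defer to (``the heart of the proof'') is precisely the content that must be supplied, and your face-by-face induction cannot supply it, since for attachments along $2$ or $3$ boundary edges the per-face factor genuinely drops below $\bm$ and no hypothesis of the form $\tau(G')\ge\bm^{\,\mathrm{Area}(G')}$ alone can absorb the deficit. The paper closes this by changing the decomposition, not by quantifying a surplus: it adds one \emph{vertex} at a time in the fixed words-on-a-page order and proves that every vertex $v\notin\hB G$ has multiplier $m_v\ge\bm$ (Proposition~\ref{P:lower_m_bound}), so the product over $V(G)\setminus\hB G$ gives $\bm^{m}$ directly (and without needing $G$ simple). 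The key steps are: via the Aldous--Broder algorithm, $m_v=\frac{2E_v}{2E_v-1}$ where $E_v$ is the probability that a walk from $v$ in $H_v$ escapes to the left neighbor $b$ before returning; Rayleigh monotonicity compares $H_v$ not with $\mathcal{L}(\Z^2)$ but with the infinite ``prefix'' graph $\U$ of all lattice points weakly before $v$ in the page order, giving $E_v\le E(\infty)$; and $E(\infty)=\frac{\bm}{2(\bm-1)}$ is evaluated by exhausting $\U$ with finite truncations $\U(k)$ and invoking the bulk limit~(\ref{E:Bulklim}) once, globally. So the Lyons-style input you correctly identify as necessary enters through a single escape-probability computation attached to a uniform per-vertex bound, which is exactly the uniformity your per-face scheme lacks; without an argument of this kind your proposal establishes only the upper half of the theorem.
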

The fact that $\text{Area}(G) = |V(G)|-\frac 12 |\partial G| - 1$ follows from Pick's Theorem~\cite{Pick} or from an elementary argument that we'll include in Section 4, where we'll also prove that $\hB G$ contains one more than half of the vertices of $\partial G$.

We'll show that the lower bound of Theorem~\ref{T:main} more generally makes sense and is true for \emph{all} grid graphs, but that the upper bound is only true of \emph{simple} grid graphs.

The example of a $1$-by-$1$ square demonstrates that the upper bound of Theorem~\ref{T:main} is sharp.  However when $\frac{|\partial G|}{|V(G)|}$ becomes small, this upper bound becomes worse than Theorem~\ref{T:Lyons}.  To improve this situation, we give a much stronger upper bound in Section 6.

This paper is organized as follows.  Section 2 contains Lyons' unpublished proof of Theorem~\ref{T:Lyons}.  Section 3 explains the main idea of this paper with an illuminating example.  Section 4 derives basic properties of the top-left boundary of a grid graph.  Sections 5 and 6 respectively prove the lower and upper bound of Theorem~\ref{T:main} plus generalizations and improvements.

Finally in Section 7 we apply our main theorem to relate two different measurements of compactness that are commonly used in the mathematical redistricting literature: a map's cut edge count and its spanning tree score.  Empirical evidence suggests a very strong negative correlation between these two measurements, and our results partially account for this correlation.  Independent work by Procaccia and Tucker-Folz related these two measurements for general planar graphs~\cite{JamieAriel}; in the case of grid graphs, our results are complimentary to theirs.

\section*{Acknowledgments} The author is pleased to thank Russell Lyons for valuable feedback.

\section{The bulk limit is an upper bound}
We thank Russell Lyons for sharing with us the following proof.  For brevity, in this section we assume knowlege of the vocabulary and results of~\cite{Lyons_pre} and~\cite{Lyons}.

\begin{proof}[Proof of Theorem~\ref{T:Lyons}]
Choose a leftmost vertex $x$ of $G$ and a rightmost vertex $y$ of $G$.  For every integer $k\in\Z$, let $G_k$ be a copy of $G$ with corresponding vertices named $x_k$ and $y_k$.  For every integer $n>0$, let $H_n$ denote the connected graph formed from all of the copies $G_k$ with $-n \le k \le n$, with the copies connected together by adding an edge between $y_k$ and $x_{k+1}$ for each $-n \le k < n$.  Notice that each $H_n$ is isomorphic to a grid graph; that is, the construction can be embedded in  $\mathcal{L}(\Z^2)$.

We have $\tau(H_n) = \tau(G)^{2n+1}$ and $|V(H_n)| = (2n+1)|V(G)|$.  Therefore,
$$\frac{\ln (\tau(G))}{|V(G)|} = \frac{\ln (\tau(H_n))}{|V(H_n)|}.$$
By~\cite[Theorem 3.2]{Lyons_pre}, the limit of the latter quantity is the tree entropy of the random rooted infinite graph $H_\infty$ formed similarly from \emph{all} copies $G_k$ and rooted at a uniformly random vertex of $G_0$. Clearly $H_\infty$ is stochastically dominated by the entire square lattice $\mathcal{L}(\Z^2)$, whence the tree entropy of $H_\infty$ is strictly less than that of $\mathcal{L}(\Z^2)$ by~\cite[Theorem 3.2]{Lyons}.  The latter is $4C/\pi$, which proves the claimed upper bound.
\end{proof}

Notice that this proof generalizes to yield the analogous result for any lattice (in any dimension) with a transitive group of translation symmetries.

\section{Setup and example}
In this section, let $G$ be a grid graph.  Our main technique involves building $G$ by adding one vertex at a time in the words-on-a-page order (starting with the top row ordered left-to-right and ending with the bottom row ordered left-to-right), and studying the multiplicative factor by which the spanning tree count grows with each added vertex.

More precisely, let $v\in V(G)$. Let $H'_v$ (respectively $H_v$) denote the subgraph of $G$ induced by all vertices prior to $v$ (respectively prior to and including $v$) with respect to the words-on-a-page ordering of $V(G)$.  We will study the multiplicative growth factor:
$$m_v = \frac{\tau(H_v)}{\tau(H'_v)}.$$
To allow for the possibility of disconnected graphs, the meaning of $\tau$ here must be slightly generalized as follows.  If $H$ is a (possibly disconnected) graph, let $\mathcal{T}(H)$ denote the set of ways to choose one spanning tree from each of its connected components, and let $\tau(H) = |\mathcal{T}(H)|$, which equals the product of the numbers of spanning trees on the connected components.  Our convention here is that a component containing just a single vertex is counted as having one spanning tree.  We additionally use the convention that $m_v=1$ if $v$ is the first vertex.  With these definitions, we can recover $\tau(G)$ as:
$$\tau(G) = \prod_{v\in V(G)} m_v.$$

It is useful to regard $v\mapsto m_v$ as real-valued function on $V(G)$, which we call the \emph{multiplier function}.  In fact, the primary technical goal of this paper is to understand its behavior on general grid graphs.  For this, it is helpful to first gain intuition from examples.

\begin{example} Figure~\ref{F:heatmaps} illustrates heatmaps for the multiplier function on two grid graphs.  The left grid graph, which we call $S$, is the $12$-by-$12$ square.  The right grid graph, which we call $D$, is the diamond inside the $17$-by-$17$ square.  These examples were chosen to have similar numbers of vertices: $|V(S)|=144$, while $|V(D)| = 145$.  Each small square represents a vertex.  The graphs' edges don't need to be displayed because adjacency is visually obvious, so the small squares are drawn large enough to bump into their neighbors forming a grid.  The color of each small square represents the value of the multiplier function on the corresponding vertex.

For each vertex $v$ of $S$ or $D$, the underlying data shows that either $m_v=1$ or $m_v\in(\mathfrak{b},4]$.  The set of vertices with multiplier $1$ (colored black) is exactly the \emph{top-left boundary}.  The square's top-left boundary has $23$ vertices, while the diamond's has $33$.  The square has more spanning trees: $\ln(\tau(S))\approx 146.15, \,\,\,\,\,\ln(\tau(D))\approx 136.19$.  The diamond is not simple, but it becomes simple if its four degree-one vertices (the top-most, bottom-most, right-most and left-most vertices) are removed; this removal doesn't affect the spanning tree count.  After this removal, the square and the diamond are both simple grid graphs with the property that the size of the top-left boundary is one more than half the size of the boundary.
\end{example}
\begin{figure}[ht!]\centering
   \scalebox{.65}{\includegraphics{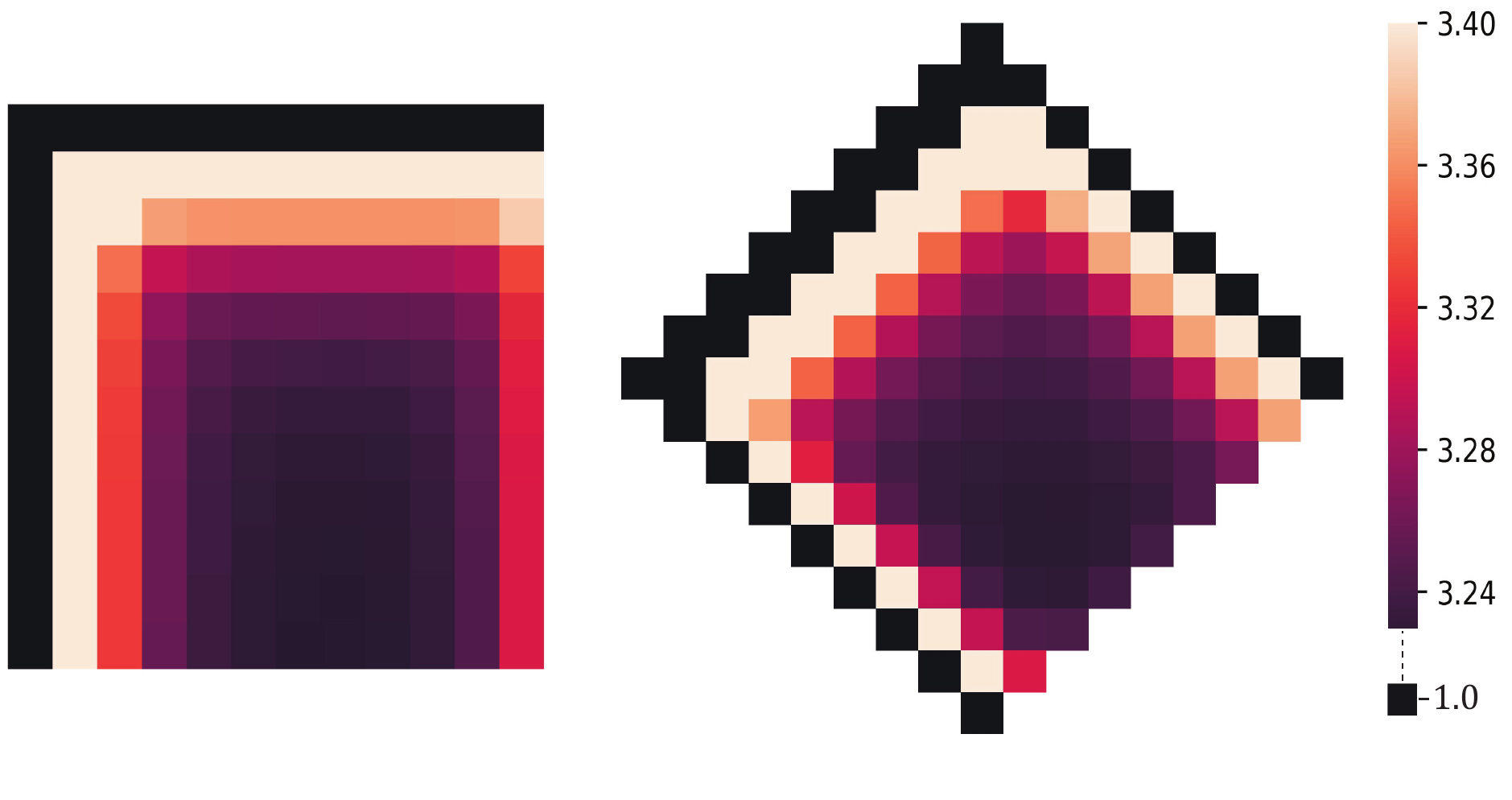}}
\caption{Heatmaps for the multiplier function on a square and a diamond}\label{F:heatmaps}
   \end{figure}

The remainder of this paper will demonstrate that each key feature of the previous examples carries over to all grid graphs or at least all simple grid graphs.

\section{The top-left boundary}
In this section, we study the \emph{top-left boundary} and prove that it behaves like the set of black-colored vertices in the examples of the previous section.  We begin by generalizing Definition~\ref{D:simple_Special} to (not necessarily simple) grid graphs.

\begin{definition}\label{D:topleft} Let $G$ be a grid graph. For each $v\in V(G)$, let ${}^\Box v$ denote the subgraph of $\mathcal{L}(\Z^2)$ comprised of the vertices and edges of the $1$-by-$1$ square whose bottom-right corner is $v$.  The \emph{top-left boundary} of $G$ is:
$$\hat{\partial} G = \{v\in V(G)\mid {}^\Box v\text{ is not a subgraph of }G\}.$$
\end{definition}

\begin{lem}\label{L:toplefty} If $G$ is simple and $v\in\hB G$, then $m_v=1$.
\end{lem}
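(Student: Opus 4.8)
The plan is to realize $G$ by adjoining its vertices one at a time in the reading order and to analyze the single step in which $v$ is added, i.e.\ the passage from $H'_v$ to $H_v$. The starting observation is that $H'_v$ consists of every vertex of $G$ lying in a row strictly above that of $v$, together with the vertices of $G$ in the row of $v$ that lie strictly to its left. Hence $H_v$ is obtained from $H'_v$ by adjoining $v$ and the edges of $G$ joining $v$ to those of its lattice neighbours already present, and the only lattice neighbours of $v$ preceding $v$ in the reading order are its north neighbour $a$ and its west neighbour $b$. So in $H_v$ the vertex $v$ has degree at most $2$, its neighbours there being exactly whichever of $a$, $b$ are joined to $v$ by an edge of $G$.

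Next I would dispose of every case but one. If $v$ has degree $0$ in $H_v$ it forms a new singleton component and $m_v=1$ by our conventions on $\tau$. If $v$ has degree $1$ it is a pendant vertex of $H_v$, so every element of $\mathcal{T}(H'_v)$ extends uniquely to one of $\mathcal{T}(H_v)$ and $m_v=1$. If $v$ has degree $2$, say with $va,vb\in E(G)$, but $a$ and $b$ lie in different components of $H'_v$, then adjoining $v$ merely merges those two components through a path of length two: every element of $\mathcal{T}(H_v)$ must use both $va$ and $vb$ and restricts to an element of $\mathcal{T}(H'_v)$, and conversely, so $m_v=1$ again. Thus $m_v\neq 1$ forces $va,vb\in E(G)$ together with $a$ and $b$ lying in the \emph{same} component of $H'_v$, and it remains to show this configuration is incompatible with $v\in\hB G$. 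This is the only place the hypothesis that $G$ is simple enters (and indeed the failure of the analogue for non-simple grid graphs is exactly the failure of a long detour from $a$ to $b$ to force ${}^\Box v$ to be a face).

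For this last step, suppose $va,vb\in E(G)$ and there is a path $P$ from $a$ to $b$ inside $H'_v$; let $C$ be the simple cycle obtained by following $va$, then $P$, then $bv$. Every vertex and edge of $C$ other than $v$ lies in $H'_v$, hence in the closed half-plane on or above the row of $v$, with the portion of $C$ on that row lying weakly to the left of $v$. Let $w$ be the centre of the unit square ${}^\Box v$. The downward vertical ray from $w$ can meet $C$ only on the row of $v$ or below it; it meets nothing below (as $C$ never descends past that row) and on that row it crosses exactly the edge $vb$ (the unique edge of $C$ on that row spanning the column through $w$), passing through no vertex of $C$ since $w$ sits in a half-integer column. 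So $w\in\mathrm{int}(C)$. Now $C\subseteq G$ lies in $\overline{\mathrm{int}(\alpha)}$, where $\alpha$ is the bounding loop of $G$; since the exterior of the Jordan curve $\alpha$ is connected, unbounded, and disjoint from $C$, it lies in the unbounded complementary region of $C$, whence $\mathrm{int}(C)\subseteq\overline{\mathrm{int}(\alpha)}$. As $w$ lies on no lattice line it is not on $\alpha$, so $w\in\mathrm{int}(\alpha)$; therefore the whole open square ${}^\Box v$, being connected and disjoint from $\alpha$, lies in $\mathrm{int}(\alpha)$. Then all four corners of ${}^\Box v$ are vertices of $G$ and all four of its edges are edges of $G$ (each such edge either lies on $\alpha$ or has its relative interior in $\mathrm{int}(\alpha)$), so ${}^\Box v$ is a subgraph of $G$, contradicting $v\in\hB G$.

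The combinatorial case analysis is routine; the main obstacle is the planar-topology bookkeeping of the last paragraph. In particular one must make the ray-crossing count genuinely airtight — verifying that the ray meets no vertex of $C$ and picks up no spurious crossing, which works precisely because $C$ stays on or above the row of $v$ and $vb$ occupies the half-integer column through $w$ — and one must invoke the Jordan curve theorem twice: once to place $w$ inside $C$, and once to conclude that a cycle contained in the closed Jordan region $\overline{\mathrm{int}(\alpha)}$ has its interior contained in that region as well.
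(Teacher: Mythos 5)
Your proof is correct and follows essentially the same route as the paper: the same case analysis on which of the edges $\overline{va}$, $\overline{vb}$ are present, the same bijections giving $m_v=1$, and the same use of simplicity to rule out $a$ and $b$ lying in a common component of $H'_v$. The only difference is that you spell out in full the Jordan-curve/ray-crossing argument that the paper compresses into the assertion that the cycle ``encloses or contains $w$,'' which is a fair elaboration rather than a new approach.
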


\begin{proof}
Denote the coordinates of $v$ as $v=(x,y)$.  Denote the relevant neighbors of $v$ as $a=(x,y+1)$, $w=(x-1,y+1)$ and $b=(x-1,y)$.  The following three cases are straightforward:
\begin{itemize}
\item If $\overline{va}\notin E(G)$ and $\overline{vb}\notin E(G)$, then $\{v\}$ is a connected component of $H_v$, so $m_v=1$.
\item If $\overline{va}\in E(G)$ and $\overline{vb}\notin E(G)$, then $|\mathcal{T}(H'_v)|=|\mathcal{T}(H_v)|$ because ``adding the edge $\overline{va}$'' is a bijection between $\mathcal{T}(H'_v)$ and $\mathcal{T}(H_v)$, so $m_v=1$.
\item If $\overline{va}\notin E(G)$ and $\overline{vb}\in E(G)$, then $m_v=1$ by a similar argument.
\end{itemize}
Next assume that  $\overline{va}\in E(G)$ and  $\overline{vb}\in E(G)$, which is the only remaining case.  We claim that $a$ and $b$ must lie in different connected components of $H'_v$.  Indeed, if there were a path between $a$ and $b$ in $H'_v$, then adding $\overline{va}$ and $\overline{vb}$ to this path would yield a loop in $H_v$ that encloses or contains $w$.  But since $G$ is simple, it contains all edges inside of any closed loop in it, so $\overline{aw},\overline{bw}\in E(G)$, contradicting the hypothesis that $v\in\hB G$.

In summary, $a$ and $b$ lie in different connected components of $H_v'$, but they are connected through $v$ in $H_v$.  Therefore, ``adding $\overline{va}$ and $\overline{vb}$'' is a bijection between $\mathcal{T}(H_v')$ and $\mathcal{T}(H_v)$, so $m_v=1$.
\end{proof}

Figure~\ref{F:counterexample} exhibits counterexamples to Lemma~\ref{L:toplefty} when $G$ is not simple.  Each graph has the property that all of its vertices lie in its top-left boundary, but yet its red-colored vertices have multipliers larger than $1$.  In fact, the red vertex of the left graph has multiplier $16$; we'll soon see that this is much larger than the multiplier of \emph{any} vertex of a simple graph.

\begin{figure}[ht!]\centering
   \scalebox{.5}{\includegraphics{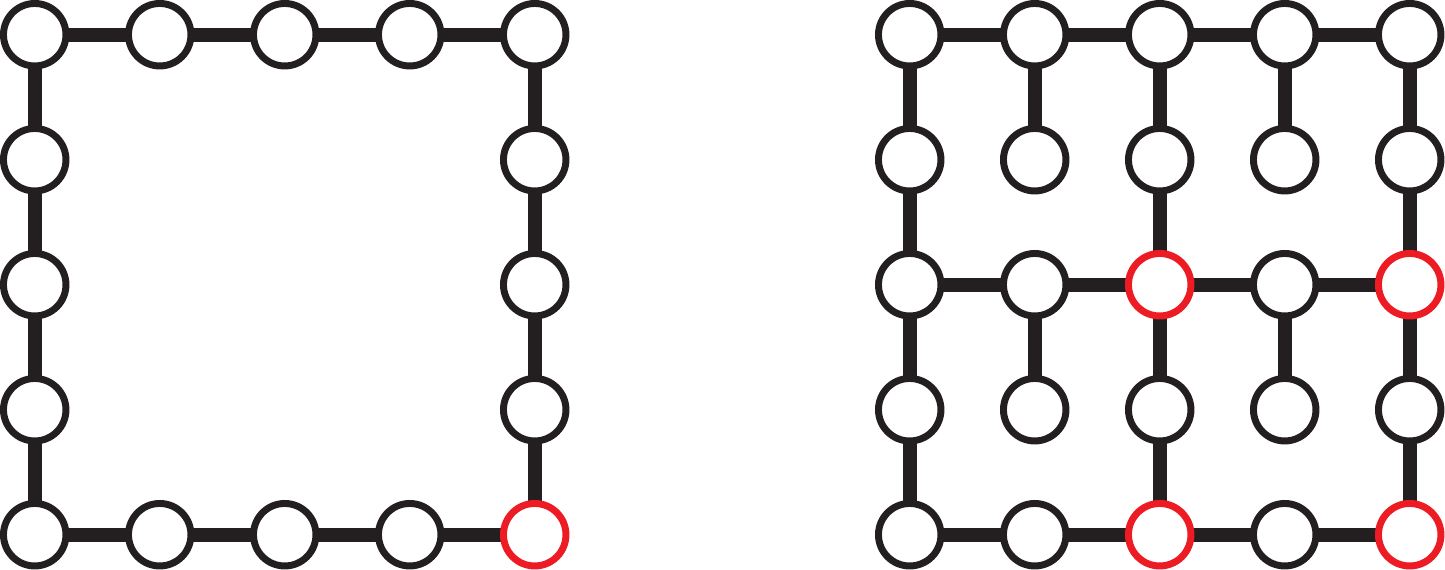}}
\caption{Counterexamples to Lemma~\ref{L:toplefty} when $G$ is not simple.}\label{F:counterexample}
   \end{figure}

The decision to consider the top-left boundary (rather than the top-right, bottom-left, or bottom-right) is somewhat arbitrary, but the size of $\hB G$ is unaffected by this decision because the following proposition provides a canonical interpretation of $|\hB G|$.
\begin{lem}\label{L:count} If $G$ is simple, then $\hB G\subset \partial G$, and $|\partial G|$ is even, and $$|\hB G| = \frac12|\partial G| +1.$$
\end{lem}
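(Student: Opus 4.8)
The plan is to prove the three assertions separately, reducing the equality $|\hB G| = \tfrac12|\partial G| + 1$ to a count of the faces of $G$; throughout, let $\alpha$ denote the bounding loop of the simple grid graph $G$. The inclusion $\hB G\subseteq\partial G$ is immediate: if $v\in V(G)$ does not lie on $\alpha$, then $v$ lies strictly inside $\alpha$, so all four unit squares of $\mathcal{L}(\Z^2)$ having $v$ as a corner lie in the interior of $\alpha$; since $G$ is simple, each of these is a face of $G$, so in particular ${}^\Box v$ is a subgraph of $G$ and $v\notin\hB G$.

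The key point is the identity $|\hB G| = |V(G)| - \text{Area}(G)$, which I would establish via the map $\beta$ sending each bounded face of $G$ to its bottom-right corner. Because $\alpha$ is a simple closed lattice polygon enclosing a region of positive area, the bounded faces of $G$ are exactly the open unit squares of $\mathcal{L}(\Z^2)$ interior to $\alpha$; conversely, if all four edges of a unit square of $\mathcal{L}(\Z^2)$ lie in $G$, then that square is a bounded face of $G$ --- otherwise those four edges would all be forced to lie on $\alpha$, making $\alpha$ their bounding $4$-cycle and the square the interior of $\alpha$ after all. Hence the bounded faces of $G$ are precisely the squares ${}^\Box v$ with $v\in V(G)$ and ${}^\Box v\subseteq G$, so $\beta$ is a bijection from the $\text{Area}(G)$ bounded faces of $G$ onto $V(G)\setminus\hB G$, which gives the identity.

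It then remains to show that $|\partial G|$ is even and $\text{Area}(G) = |V(G)| - \tfrac12|\partial G| - 1$; substituting the latter into the identity above yields $|\hB G| = \tfrac12|\partial G| + 1$. This is Pick's Theorem for the polygon bounded by $\alpha$ (whose interior lattice points number $|V(G)| - |\partial G|$ and whose boundary lattice points number $|\partial G|$) and can simply be cited from \cite{Pick}; for completeness I would instead give the elementary argument. Put $V=|V(G)|$, $E=|E(G)|$, and $A=\text{Area}(G)$. Euler's formula for the connected plane graph $G$ gives $V - E + (A+1) = 2$, so $A = E - V + 1$. Counting incident pairs (edge, bounded face): each of the $A$ bounded faces is a unit square bordered by $4$ edges; each of the $|\partial G|$ edges of the cycle $\alpha$ borders exactly one bounded face; and each of the remaining $E - |\partial G|$ edges borders exactly two. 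Hence $4A = 2(E - |\partial G|) + |\partial G| = 2E - |\partial G|$, so $|\partial G| = 2E - 4A$ is even and $E = 2A + \tfrac12|\partial G|$, which together with $A = E - V + 1$ gives $A = V - \tfrac12|\partial G| - 1$.

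The step needing the most care is the face count in the second paragraph --- specifically, the claim that a unit square of $\mathcal{L}(\Z^2)$ with all four edges in $G$ must be a bounded face of $G$, which rests on a small Jordan-curve-type observation about the simple lattice polygon $\alpha$. Once that is in hand, the rest is routine bookkeeping with Euler's formula, and if one is content to invoke Pick's Theorem then the face count is essentially the whole proof.
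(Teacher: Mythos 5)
Your proof is correct, but it takes a genuinely different route from the paper. The paper proves the evenness of $|\partial G|$ and the identity $|\hB G|=\tfrac12|\partial G|+1$ by induction on $|V(G)|$: it sweeps a line of slope $1$ from the left to locate an extreme vertex $v$, removes the face of $G$ whose top-left corner is $v$, and runs a case analysis on which neighboring faces are present (including a case where the reduced graph is no longer simple and splits into two simple pieces glued at a vertex), applying the inductive hypothesis to the pieces. You instead prove the statement non-inductively by establishing $|\hB G|=|V(G)|-\text{Area}(G)$ via the bijection matching each unit-square face with its bottom-right corner (this is exactly the bijection the paper uses later, in Proposition~\ref{P:threeint}), and then deriving $\text{Area}(G)=|V(G)|-\tfrac12|\partial G|-1$ together with the evenness of $|\partial G|$ from Euler's formula plus the edge--face incidence count $4\,\text{Area}(G)=2|E(G)|-|\partial G|$; in effect you prove Proposition~\ref{P:threeint} first and read off Lemma~\ref{L:count}, reversing the paper's logical order and supplying the ``elementary argument'' for the Pick-type identity in a different way. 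What your route buys is the elimination of the paper's delicate case analysis and any appeal to Pick's Theorem; what it costs is reliance on Euler's formula and the Jordan-curve-type facts you flag (every unit square interior to $\alpha$ is a bounded face of $G$, every bounded face is such a square, each edge of $\alpha$ borders exactly one bounded face and every other edge two), all of which you justify correctly, if tersely, using that a simple grid graph contains everything on or inside its bounding loop.
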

\begin{proof} The claim that $\hB G\subset\partial G$ is straightforward.  The proof of the other two assertions is by induction.  The assertions are clearly true for a $1$-by-$1$ square, which is the only simple grid graph with $4$ or fewer vertices.  Now suppose that the assertions are true of all simple grid graphs with $\leq N$ vertices.  Let $G$ be a simple grid graph with $N+1$ vertices.

Consider a line of slope $1$ in $\mathbb{R}^2\supset\Z^2$ positioned far left, and shift it right until it first touches a vertex $v$ of $G$.  This line is illustrated as a dashed black line in Figure~\ref{F:half_boundary}.  Let $f$ be the face of $G$ whose top-left corner is $v$.  Let $G'$ be the grid graph obtained by removing $v$ and also removing each other edge and vertex that belongs only to $f$ (not to any other face of $G$).

The cases that must be considered are exemplified in Figure~\ref{F:half_boundary}.  In case 1, the right neighbor of $f$ is a face $G$ but the bottom neighbor is not.  Here $G'$ is a simple grid graph, and the inductive hypothesis applied to $G'$ leads to the desired result for $G$.  The case for which the bottom neighbor of $f$ is a face $G$ but the right neighbor is not is handled similarly.

In case $2$, the faces of $G$ include the right, bottom, and bottom-right neighbors of $f$. Again $G'$ is a simple grid graph, and the inductive hypothesis applied to $G'$ leads to the desired result for $G$.

In case $3$, the faces of $G$ include the right and bottom neighbors of $f$ but not the bottom-right neighbor.  Here $G'$ is not simple but is obtained from two disjoint simple grid graphs by identifying two vertices into one.  Applying the inductive hypothesis to both of them leads to the desired result for $G$.

\end{proof}

\begin{figure}[ht!]\centering
   \scalebox{.35}{\includegraphics{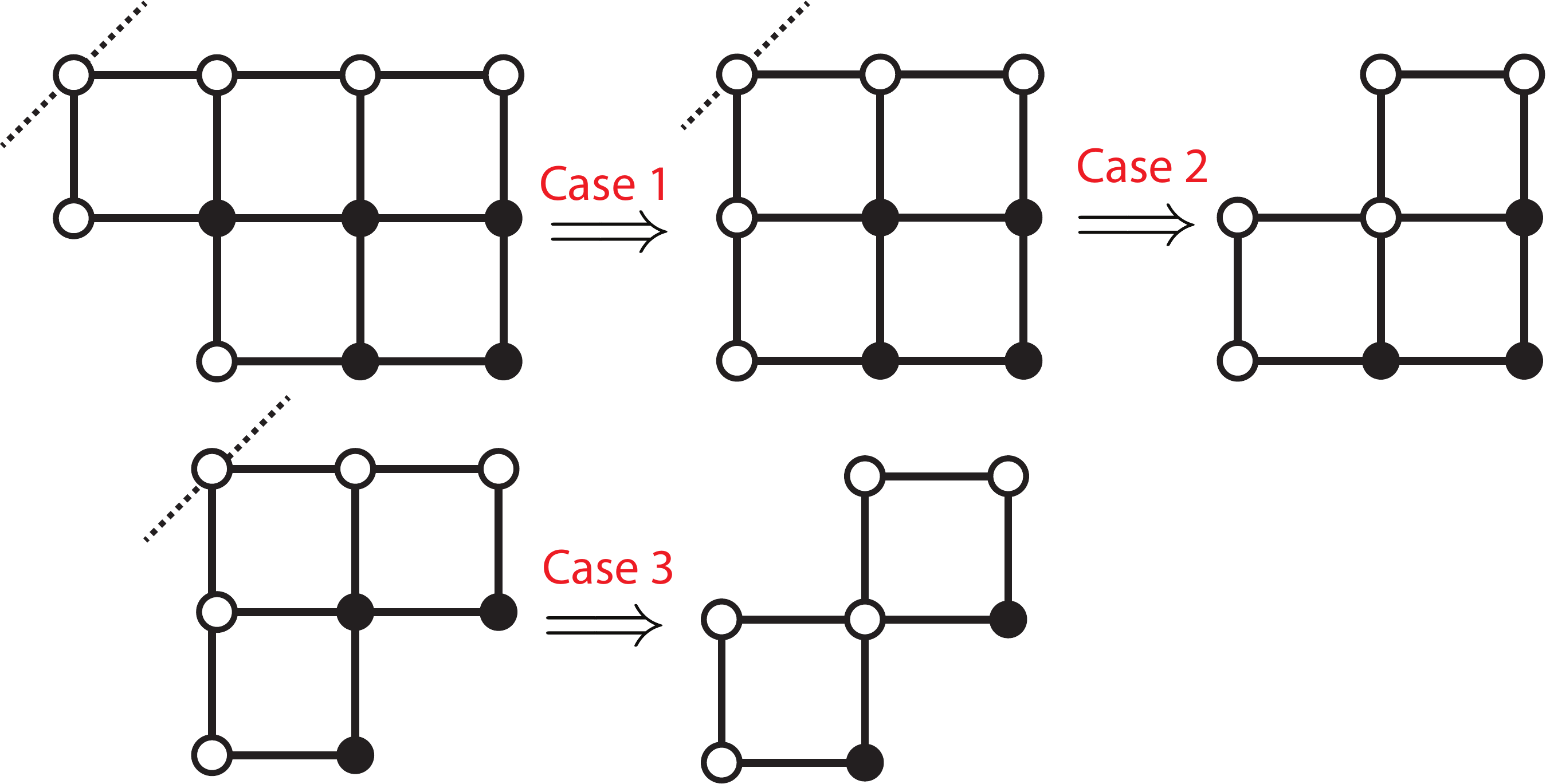}}
\caption{Inductive proof of Lemma~\ref{L:count}}\label{F:half_boundary}
   \end{figure}

The following proposition establishes the equivalence of the three expression for $m$ in Theorem~\ref{T:main}.
\begin{prop}\label{P:threeint} If $G$ is a simple grid graph, then
$$\text{Area}(G) = |V(G)|-\frac 12 |\partial G| - 1 = |V(G)|-|\hB G|.$$
\end{prop}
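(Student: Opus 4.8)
My plan is to treat the two equalities separately. The second one, $|V(G)| - \tfrac12|\partial G| - 1 = |V(G)| - |\hB G|$, is immediate from Lemma~\ref{L:count}, which already tells us $|\hB G| = \tfrac12|\partial G| + 1$; subtracting from $|V(G)|$ gives the identity with nothing further to prove. So the real content lies entirely in the first equality, $\text{Area}(G) = |V(G)| - \tfrac12|\partial G| - 1$.

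For that, I would use Euler's formula together with a degree count. By Definition~\ref{D:simple}, $\text{Area}(G)$ is the number of (bounded) faces of $G$; since $G$ is a connected planar graph, Euler's formula gives $\text{Area}(G) = |E(G)| - |V(G)| + 1$. Hence it suffices to establish $|E(G)| = 2|V(G)| - \tfrac12|\partial G| - 2$, or equivalently, via the handshake lemma, $\sum_{v\in V(G)}\deg_G(v) = 4|V(G)| - |\partial G| - 4$.

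To compute this degree sum, I would classify vertices. Every vertex strictly inside $\alpha$ (that is, in $V(G)\setminus\partial G$) has all four incident lattice edges lying in $G$, hence degree $4$. For a boundary vertex $v$, the loop $\alpha$ passes through $v$ exactly once and accounts for two of the four lattice edges at $v$; a further such edge belongs to $G$ precisely when the corresponding unit segment points into the interior of $\alpha$. Consequently $\deg_G(v)$ equals $2$, $3$, or $4$ according to whether the interior angle of $\alpha$ at $v$ is $90^\circ$ (a convex corner), $180^\circ$ (a straight vertex), or $270^\circ$ (a reflex corner). Writing $c$ and $r$ for the numbers of convex and reflex corners of $\alpha$, the boundary contributes $2c + 3(|\partial G| - c - r) + 4r = 3|\partial G| - (c - r)$ to the degree sum. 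Finally, traversing the simple rectilinear loop $\alpha$ once produces a total exterior turning of $360^\circ$, i.e.\ $90^\circ(c - r) = 360^\circ$, so $c - r = 4$. Putting the pieces together, $\sum_{v} \deg_G(v) = 4\bigl(|V(G)| - |\partial G|\bigr) + 3|\partial G| - 4 = 4|V(G)| - |\partial G| - 4$, which is exactly what is needed, and then $\text{Area}(G) = |E(G)| - |V(G)| + 1 = |V(G)| - \tfrac12|\partial G| - 1$.

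The one point needing care is the claim that a boundary vertex's degree is dictated precisely by the local interior angle of $\alpha$; this is where simplicity of $G$ is used, since it guarantees $\alpha$ has no repeated vertices or pinch points and that $G$ contains \emph{every} lattice edge on or inside $\alpha$. The identity $c - r = 4$ is the standard total-turning fact for a simple rectilinear polygon. (Alternatively, the first equality is literally Pick's Theorem applied to the polygon bounded by $\alpha$, with $|\partial G|$ boundary lattice points and $|V(G)| - |\partial G|$ interior lattice points; or it can be proved by the same face-removal induction used for Lemma~\ref{L:count}, tracking how $\text{Area}$, $|V|$, and $|\partial G|$ change in each of the three cases there.)
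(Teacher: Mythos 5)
Your proposal is correct, but it reaches the first equality by a genuinely different route than the paper. You handle the second equality exactly as the paper does, by quoting Lemma~\ref{L:count}. For the first equality, however, you run an Euler-formula argument: $\text{Area}(G)=|E(G)|-|V(G)|+1$, a degree count splitting $\partial G$ into convex, straight, and reflex vertices, and the total-turning identity $c-r=4$ for a simple rectilinear loop, which together give $2|E(G)|=4|V(G)|-|\partial G|-4$ and hence the claim. This is in effect a self-contained proof of the needed case of Pick's Theorem, and your care about why simplicity forces the degree of a boundary vertex to be determined by the interior angle (the loop visits each vertex once, and an edge leaving a boundary vertex lies in $G$ exactly when it points into the interior) is exactly where the hypothesis enters. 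The paper instead either cites Pick's Theorem outright or, more slickly, proves the \emph{other} equality $\text{Area}(G)=|V(G)|-|\hB G|$ directly by the one-line bijection sending each face of $G$ to its bottom-right corner, which lands in $V(G)\setminus\hB G$; combined with Lemma~\ref{L:count} this yields the first equality. Your argument buys independence from Pick's Theorem and from that bijection at the cost of more bookkeeping; the paper's bijection is shorter and also explains structurally why the top-left boundary is the natural complement of the face set, which is the viewpoint used again in the spanning-tree estimates.
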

\begin{proof}
The second equality comes from Lemma~\ref{L:count}.  As mentioned in the introduction, the first equality follows from Pick's Theorem.  Alternatively, $\text{Area}(G) = |V(G)|-|\hB G|$ because the set of faces of $G$ correspond one-to-one with $V(G)-\hB G$ by matching each face with its bottom-right corner.
\end{proof}
\section{A lower bound on $\tau(G)$}
The goal of this section is to prove the lower bound in Theorem~\ref{T:main}.  In fact, we will prove the following generalization to (not necessarily simple) grid graphs:

\begin{theorem}\label{T:lower}If $G$ is a grid graph, then $$\tau(G)\geq\bm^m,$$
where $m = |V(G)|- |\hB G|$.
\end{theorem}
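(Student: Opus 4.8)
The plan is to use the same vertex-by-vertex decomposition $\tau(G)=\prod_{v\in V(G)} m_v$ introduced in Section 3, and to show that $m_v\geq \bm$ for every vertex $v\notin\hB G$, while $m_v\geq 1$ trivially for the remaining $|\hB G|$ vertices. Multiplying these bounds over all of $V(G)$ immediately yields $\tau(G)\geq \bm^{|V(G)|-|\hB G|}=\bm^m$. So everything reduces to a single local statement: if $v\notin\hB G$, then $m_v\geq\bm$.

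To prove $m_v\geq\bm$ for $v\notin\hB G$, I would unpack what $v\notin\hB G$ means: the $1$-by-$1$ square ${}^\Box v$ with bottom-right corner $v$ is entirely contained in $G$. Writing $v=(x,y)$, this forces the three vertices $a=(x,y+1)$, $w=(x-1,y+1)$, $b=(x-1,y)$ and all four edges of that square to lie in $G$; in particular $\overline{va},\overline{vb}\in E(G)$, and $a,b,w$ all precede $v$ in the words-on-a-page order, as does any further left-neighbor or up-neighbor of $v$ that happens to lie in $G$. Now $m_v$ counts, in the generalized sense, the number of ways to extend an element of $\mathcal{T}(H'_v)$ to an element of $\mathcal{T}(H_v)$ when the new vertex $v$ and its (one or two) back-edges are adjoined. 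The cleanest way to get a lower bound is the deletion–contraction / effective-resistance viewpoint: since $v$ has degree $2$ in $H_v$ (edges to $a$ and to $b$), the ratio $\tau(H_v)/\tau(H'_v)$ equals $2 + R$, where $R$ is the effective resistance between $a$ and $b$ in the graph $H'_v$ — equivalently, $m_v = 1 + (\text{number of spanning trees of } H'_v+\overline{ab} \text{ that omit } \overline{ab}) / \tau(H'_v) + (\text{those that use it})/\tau(H'_v)$. I would phrase this via: attaching a degree-$2$ vertex multiplies $\tau$ by (resistance of the path through it, namely $2$) plus (effective resistance of the rest between its two neighbors). The key sub-point is then a lower bound on the $a$–$b$ effective resistance inside $H'_v$, using only the presence of the partial square — but in fact the crude bound $m_v\geq 2$ is not enough, since $\bm\approx 3.21 > 2$, so I cannot merely use degree $2$.

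This is where the main obstacle lies, and where I expect the real content of Section 5 to be. A bound of $m_v\geq 2$ is immediate but insufficient; one needs to exploit more of the local structure around $v$, presumably that ${}^\Box v\subset G$ together with whatever vertices to the left and above $v$ are present, to push the multiplier up to at least $\bm=\exp(4C/\pi)$. I anticipate the argument will be an amortized or "charging" one rather than a purely pointwise one: perhaps $m_v\geq 2$ always, and one shows that the vertices where $m_v$ is close to $2$ are compensated by nearby vertices with $m_v$ close to $4$, so that the geometric mean over $V(G)\setminus\hB G$ is at least $\bm$. Alternatively, and more likely given the clean statement, one reduces to the square case: a full $n\times n$ square has $\tau$ whose logarithm per interior-type vertex tends to exactly $4C/\pi$ from above, and a monotonicity/domination argument (any grid graph contains enough structure around each non-top-left vertex to beat the square's local rate) would do it — essentially the reverse inequality to Lyons' theorem, applied locally. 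I would first try to nail down $m_v$ exactly for $v$ deep inside a large square, confirm it exceeds $\bm$, and then argue by a spanning-tree monotonicity (adding edges or vertices can only help, Rayleigh-type) that the same lower bound on $m_v$ survives in an arbitrary grid graph once ${}^\Box v\subseteq G$. Verifying that this local comparison is valid for \emph{every} non-top-left vertex, including those near a complicated boundary, is the step I expect to require the most care.
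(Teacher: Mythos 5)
Your reduction is the same as the paper's: write $\tau(G)=\prod_{v}m_v$ and prove the pointwise claim that $m_v\geq\bm$ whenever $v\notin\hB G$. But you then stop at exactly the step that carries all the content, and the strategies you sketch for it do not close the gap. The charging/amortization idea is a different (undeveloped) statement, and your ``reverse Lyons'' monotonicity has the direction backwards: with $m_v=2+R_{ab}(H'_v)$ (effective resistance between the top and left neighbors of $v$), enlarging $H'_v$ can only \emph{decrease} the resistance and hence \emph{decrease} $m_v$. So ``adding edges or vertices can only help'' is false for the quantity you need to bound from below; the bare square gives $m_v=4$ and the multiplier drops toward $\bm$ as the surroundings grow. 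The correct use of Rayleigh is therefore a comparison of $H_v$ with the \emph{largest} possible surrounding graph, namely the infinite words-on-a-page prefix $\U$ of $\mathcal{L}(\Z^2)$: the escape probability $E_v$ on $H_v$ is at most the escape probability $E(\infty)$ on $\U$, and since $m_v=\frac{2E_v}{2E_v-1}$ is decreasing in $E_v$, one gets $m_v\geq\frac{2E(\infty)}{2E(\infty)-1}$.

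What is then still missing---and what your proposal never addresses---is why this worst-case value is exactly $\bm$, i.e.\ why $E(\infty)=\frac{\bm}{2(\bm-1)}$. This is not a finite computation like ``a vertex deep inside a large square exceeds $\bm$'' (no finite configuration achieves the bound; the values only tend to $\bm$ in the limit). The paper obtains it by translating $m_v$ into a spanning-tree edge probability via Aldous--Broder, then into the escape probability, and finally identifying the limiting escape probability on $\U$ by playing the non-increasing sequence $F(k)$ of multiplier bounds on the exhaustion $\U(k)\subset\U$ against the known bulk limit $\lim \ln\tau(S(n))/|V(S(n))|=\ln\bm$: if the limit of $F(k)$ were above or below $\bm$, summing the multipliers over a large square would contradict that bulk limit. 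Without this identification of the constant (or some substitute for it), your argument only yields $m_v>2$, which, as you note yourself, is not enough. So the proposal has the right skeleton but a genuine gap at its core, and the one concrete mechanism you propose for filling it points the monotonicity the wrong way.
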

Theorem~\ref{T:lower} is an immediate consequence of the following:
\begin{prop}\label{P:lower_m_bound}
Let $G$ be a grid graph and $v\in V(G)$.  If $v\notin\hat{\partial} G$, then $m_v\geq \mathfrak{b}$.
\end{prop}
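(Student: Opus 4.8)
The plan is to get a lower bound on the multiplicative factor $m_v = \tau(H_v)/\tau(H'_v)$ directly from the local structure around $v$, using the deletion–contraction / effective-resistance interpretation of spanning tree counts. Write $v=(x,y)$ with neighbors $a=(x,y+1)$ (above) and $b=(x-1,y)$ (left), the two neighbors of $v$ that precede it in the words-on-a-page order. Since $v\notin\hB G$, the little square ${}^\Box v$ is a subgraph of $G$, so in particular both edges $\overline{va},\overline{vb}\in E(G)$ and the corner $w=(x-1,y+1)$ is present with $\overline{aw},\overline{bw}\in E(G)$; moreover $a,w,b$ all precede $v$, so $w$ gives a path from $a$ to $b$ inside $H'_v$. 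The graph $H_v$ is obtained from $H'_v$ by adding the new vertex $v$ together with the two edges $\overline{va}$ and $\overline{vb}$ (any other edges at $v$ go to later vertices and are absent in $H_v$). Thus $H_v$ is $H'_v$ with a path of length $2$ attached between $a$ and $b$.

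First I would record the exact formula for this operation. If one attaches a new degree-$2$ vertex $v$ joined to two existing vertices $a,b$ lying in the same component, then by deletion–contraction on $\overline{va}$,
$$\tau(H_v) = \tau(H'_v) + \tau(H'_v/\overline{ab}),$$
where $H'_v/\overline{ab}$ denotes $H'_v$ with $a$ and $b$ identified; equivalently, in terms of the effective resistance $R=R_{H'_v}(a,b)$ between $a$ and $b$ in $H'_v$ (with unit resistances), $\tau(H'_v/\overline{ab}) = \tau(H'_v)/R$, so
$$m_v = 1 + \frac{1}{R_{H'_v}(a,b)}.$$
Hence the proposition reduces to the purely combinatorial claim that $R_{H'_v}(a,b) \le \dfrac{1}{\bm-1} \approx 0.4525$ whenever $v\notin\hB G$.

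Next I would bound $R_{H'_v}(a,b)$ from above using Rayleigh monotonicity: deleting edges only increases effective resistance, so it suffices to exhibit a small subgraph of $H'_v$ containing a low-resistance $a$–$b$ connection. The crude bound is the single path $a$–$w$–$b$, giving $R\le 2$ and $m_v\ge 3/2$ — not enough, since we need $\bm\approx 3.21$. The real work is to find enough guaranteed parallel structure near $v$. The key geometric input is that $v$ is an interior-type vertex (its lower-right square is full), and one should push this: I would look for a second, parallel short path from $a$ to $b$ through the vertices $(x,y+1),(x,y+2),(x-1,y+2),\dots$ or through $(x+1,\cdot)$, exploiting that consecutive non-$\hB$ vertices force larger filled regions. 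In the best case one shows that the portion of $H'_v$ lying weakly above row $y$ and weakly left of column $x$ always contains a "staircase" of filled unit squares near $v$ that by itself has $a$–$b$ resistance at most $1/(\bm-1)$; combining several length-$2$ detours in parallel (two of them already give $R\le 1$, i.e. $m_v\ge 2$; a handful more gets past $\bm$) should close the gap. I expect this resistance estimate — certifying that every grid graph, merely from $v\notin\hB G$, contains enough filled cells around $v$ to force $R_{H'_v}(a,b)\le 1/(\bm-1)$ — to be the main obstacle, and it is presumably where the constant $\bm=\exp(4C/\pi)$ enters, perhaps via a monotone limiting argument comparing $H'_v$ near $v$ to a quarter-plane or half-plane lattice whose relevant effective resistance is known in closed form. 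An alternative route to the same inequality, possibly cleaner, is to bound $m_v$ below by $m_{v}$ computed in a suitable infinite sublattice using the monotonicity of $\tau$ under adding vertices/edges (Rayleigh again), reducing to a known exact computation of a corner multiplier in $\mathcal{L}(\Z^2)$ whose value is exactly $\bm$.
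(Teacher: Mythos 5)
Your setup is right (in $H_v$ the new vertex $v$ has degree $2$, joined to $a$ and $b$, which lie in a common component of $H'_v$ via $w$), but the deletion--contraction step is wrong, and the error reverses the whole argument. Contracting $\overline{va}$ in $H_v$ produces $H'_v$ \emph{with an extra edge joining $a$ and $b$}, not $H'_v/\overline{ab}$; the correct identity is $\tau(H_v)=2\tau(H'_v)+\tau(H'_v/\overline{ab})$. Moreover, since $a$ and $b$ are not adjacent in $H'_v$, the contraction count is $\tau(H'_v/\overline{ab})=R\cdot\tau(H'_v)$ (two-component forests separating $a,b$ over spanning trees), not $\tau(H'_v)/R$, where $R=R_{H'_v}(a,b)$. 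Hence $m_v=2+R$, not $1+1/R$. A sanity check exposes the problem: if $G$ is the $1$-by-$1$ square and $v$ its bottom-right corner, then $H'_v$ is the path $a$--$w$--$b$, so $R=2$ and $m_v=4$ (the $4$-cycle has four spanning trees), whereas your formula gives $3/2$; and your reduced claim $R\le 1/(\bm-1)\approx 0.45$ is false in this very example, so the reduction you propose cannot be repaired.

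Because $m_v=2+R$ is \emph{increasing} in $R$, proving $m_v\ge\bm$ requires a \emph{lower} bound $R\ge\bm-2\approx 0.21$, and your plan of certifying lots of parallel short $a$--$b$ connections (an upper bound on $R$) runs in exactly the wrong direction: more filled structure near $v$ makes $m_v$ smaller, not larger. The correct use of Rayleigh monotonicity is the opposite comparison: $H'_v$ is a subgraph of the infinite ``words-on-a-page'' graph ($\U$ with $v$ and its edges removed), and passing to that larger graph only \emph{decreases} resistance, so $R$ is bounded below by the infinite-graph resistance between $a$ and $b$. This is essentially the paper's route, phrased there through escape probabilities and Aldous--Broder ($m_v=2E_v/(2E_v-1)$, with $E_v\le E(\infty)$ by Rayleigh). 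The remaining ingredient, which your sketch defers to ``a known closed form,'' is genuinely nontrivial: one must show $E(\infty)=\bm/(2(\bm-1))$ (equivalently, that the infinite-graph resistance equals $\bm-2$), and the paper gets this not from a table of lattice resistances but from the bulk-limit formula (1.1), by squeezing the multipliers of interior vertices of large squares between $F(k)$ and $\bm$. Without both the corrected formula and this identification of the limiting constant, the proposal does not prove the proposition.
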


For the remainder of this section, we assume that $G$ is a grid graph, we fix a vertex $v\in V(G)$ and we assume that $v\notin\hat{\partial} G$, with the goal proving that $m_v\geq \mathfrak{b}$.

Denote the coordinates of $v$ as $v=(x,y)$ and denote the top and left neighbors of $v$ as $a=(x,y+1)$ and $b=(x-1,y)$.  Since $v\notin\hat{\partial} G$, we know that $\overline{va},\overline{vb}\in E(H_v)$.

\begin{lem}\label{L:P} Let $P_v$ denote the probability that a uniformly randomly selected member of $\mathcal{T}(H_v)$ contains both $\overline{va}$ and $\overline{vb}$.  Then $0<P_v<1$ and
$$m_v = \frac{2}{1-P_v}.$$
\end{lem}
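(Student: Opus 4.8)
The plan is to reduce the statement to a spanning‑tree count on a single graph. Since $v\notin\hB G$, the unit square ${}^\Box v$ is a subgraph of $G$; write $w=(x-1,y+1)$ for its top‑left corner, so that $w\in V(G)$ and $\overline{aw},\overline{wb}\in E(G)$. In the words‑on‑a‑page order, the right and bottom neighbors of $v$ (if they lie in $G$) come after $v$, so the only neighbors of $v$ present in $H_v$ are $a$ and $b$, and by the setup of this section both edges $\overline{va},\overline{vb}$ are present; thus $v$ has degree exactly $2$ in $H_v$. Also $w$ precedes $v$, so $w\in V(H'_v)$ and the path $a$–$w$–$b$ lies in $H'_v$; hence $a$ and $b$ belong to a common connected component, call it $K'$, of $H'_v$. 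Adjoining $v$ to $K'$ via $\overline{va}$ and $\overline{vb}$ produces the component $K$ of $H_v$ that contains $v$, and every other component of $H_v$ is a component of $H'_v$. Since $\tau$ is multiplicative over connected components, this gives $m_v=\tau(K)/\tau(K')$ and in particular $0<m_v<\infty$; moreover, because the spanning‑tree choices on distinct components are independent, $P_v$ equals the probability that a uniformly random spanning tree of $K$ contains both $\overline{va}$ and $\overline{vb}$.

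Next I would classify the spanning trees of $K$ according to which of $\overline{va},\overline{vb}$ they contain. Since $v$ has degree $2$ in $K$, every spanning tree of $K$ contains at least one of these edges, so there are exactly three classes: those containing $\overline{va}$ but not $\overline{vb}$, those containing $\overline{vb}$ but not $\overline{va}$, and those containing both; write $N$ for the size of the last class. Deleting $v$ together with its unique incident tree‑edge identifies the first class bijectively with the set of spanning trees of $K'$, and likewise for the second class, so $\tau(K)=2\,\tau(K')+N$. Since $P_v=N/\tau(K)$, we get $\tau(K)\,(1-P_v)=\tau(K)-N=2\,\tau(K')$, and dividing by $\tau(K')$ yields $m_v=\tau(K)/\tau(K')=2/(1-P_v)$, as claimed.

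It then remains to verify the strict inequalities $0<P_v<1$. Because $K'$ is connected, $\tau(K')\geq1$, so $\tau(K)-N=2\,\tau(K')>0$ and therefore $P_v<1$. For $P_v>0$ it suffices to exhibit one spanning tree of $K$ containing both $\overline{va}$ and $\overline{vb}$: start from any spanning tree $T'$ of $K'$, delete one edge from the $a$–$b$ path in $T'$ (which is nonempty since $a\ne b$), and then add $v$ together with both edges $\overline{va}$ and $\overline{vb}$; the result is a spanning tree of $K$ of the required type, so $N\geq1$ and $P_v>0$.

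I expect the only mildly delicate point to be the bookkeeping in the first paragraph — establishing that $v$ has degree exactly $2$ in $H_v$ and that $a$ and $b$ lie in a single component of $H'_v$ — since everything afterward is a standard deletion/classification argument together with a short rearrangement.
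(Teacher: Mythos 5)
Your proof is correct and takes essentially the same route as the paper: the same three-way classification of spanning trees according to which of $\overline{va},\overline{vb}$ they contain, the same ``add the edge'' bijections yielding $\tau(K)=2\tau(K')+N$ (the paper's $m_v=\frac{2\tau(H'_v)+|T_3|}{\tau(H'_v)}$), and the same cycle-breaking construction showing the both-edges class is nonempty. Your reduction to the component $K$ and the explicit degree-$2$ and same-component bookkeeping merely spell out details the paper leaves implicit.
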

\begin{proof}
Partition the members of $\mathcal{T}(H_v)$ into three sets, $\mathcal{T}(H_v)= T_1\cup T_2\cup T_3$, according to whether they:
\begin{enumerate}
\item[($T_1$)] Contain $\overline{va}$ but not $\overline{vb}$
\item[($T_2$)] Contain $\overline{vb}$ but not $\overline{va}$
\item[($T_3$)] Contain $\overline{va}$ and $\overline{vb}$
\end{enumerate}

Since ${}^\Box v$ is a subgraph of $H_v$, it is straightforward to see that all three sets in this partition are nonempty.  For example, $T_3$ is nonempty because a member of $T_3$ can be obtained from any member of $\mathcal{T}(H_v')$ by adding $\overline{va}$ and $\overline{vb}$ and removing any other edge of the resulting cycle that this creates.

Furthermore, $|\mathcal{T}(H'_v)|=|T_1|$ because ``adding the edge $\overline{va}$'' is a bijection between these sets.  Similarly $|\mathcal{T}(H'_v)|=|T_2|$.  Thus,
$m_v = \frac{\tau(H_v)}{\tau(H_v')} = \frac{2\tau(H'_v)+|T_3|}{\tau(H'_v)}$.  Solving $P_v = \frac{|T_3|}{2\tau(H'_v)+|T_3|}$ for $|T_3|$ and substituting completes the proof.
\end{proof}

\begin{lem}\label{L:E} Let $E_v$ denote the probability that a simple random walk on $H_v$ starting at $v$ ``escapes to $b$,'' which means that it reaches $b$ before returning to $v$.  We have:
$$m_v = \frac{2E_v}{2E_v-1}.$$
\end{lem}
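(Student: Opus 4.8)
The plan is to use the classical correspondence between uniform spanning trees and random walks in order to rewrite the quantity $P_v$ of Lemma~\ref{L:P} in terms of an effective resistance, and then to convert that effective resistance into the escape probability $E_v$. Since Lemma~\ref{L:P} already supplies $m_v=\frac{2}{1-P_v}$, it suffices to establish the single identity $P_v=\frac{1}{E_v}-1$; substituting it into Lemma~\ref{L:P} gives $1-P_v=2-\frac{1}{E_v}=\frac{2E_v-1}{E_v}$ and hence $m_v=\frac{2E_v}{2E_v-1}$. (Because $m_v$ is a finite positive real and $P_v\in(0,1)$ by Lemma~\ref{L:P}, this chain also forces $E_v>\tfrac12$, so no denominator vanishes.)

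For the setup, let $K$ be the connected component of $H_v$ containing $v$. Since $v\notin\hB G$, the square ${}^\Box v$ is a subgraph of $G$, and its four vertices $v$, $a$, $b$ and $w=(x-1,y+1)$ all weakly precede $v$ in the words-on-a-page order, so ${}^\Box v$ is in fact a subgraph of $H_v$. Hence $v$ has exactly the two neighbors $a$ and $b$ in $H_v$ (its other two lattice-neighbors come later), the vertices $v,a,b,w$ all lie in $K$, and $K$ stays connected after $v$ is deleted. A uniformly random member of $\mathcal{T}(H_v)$ restricts to a uniformly random spanning tree $T$ of $K$ independently of the other components, and whether $T$ contains $\overline{va}$ and $\overline{vb}$ depends only on $K$; so $P_v$ may be computed entirely inside $K$, with unit conductance on every edge.

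I would then use two standard facts (see \cite{Lyons}): (i) for any edge $e$ of $K$, $\Prob(e\in T)=R_{\mathrm{eff}}(e)$, the effective resistance across $e$ in $K$; and (ii) for a neighbor $z$ of $v$ in $K$, the probability that simple random walk from $v$ reaches $z$ before returning to $v$ equals $\bigl(\deg_K(v)\,R_{\mathrm{eff}}(v,z)\bigr)^{-1}$. With $\deg_K(v)=2$, fact (ii) at $z=b$ gives $R_{\mathrm{eff}}(v,b)=\frac{1}{2E_v}$. Next, $R_{\mathrm{eff}}(v,a)=R_{\mathrm{eff}}(v,b)$: letting $\rho$ denote the effective resistance between $a$ and $b$ in $K$ with $v$ deleted (finite and positive, since $K\setminus v$ is connected), a series–parallel reduction of the two unit edges at $v$ gives $R_{\mathrm{eff}}(v,a)=\frac{1\cdot(1+\rho)}{1+(1+\rho)}=\frac{1+\rho}{2+\rho}$, and interchanging $a$ and $b$ produces the identical value for $R_{\mathrm{eff}}(v,b)$. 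Finally, every spanning tree of $K$ must contain at least one of $\overline{va},\overline{vb}$ (otherwise $v$ is isolated), so $\{\overline{va}\in T\}\cup\{\overline{vb}\in T\}$ is the whole sample space, and inclusion–exclusion yields
$$P_v=\Prob(\overline{va}\in T)+\Prob(\overline{vb}\in T)-1=2R_{\mathrm{eff}}(v,b)-1=\frac{1}{E_v}-1,$$
the desired identity.

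I do not expect a substantive obstacle: the real points of care are the bookkeeping that lets one work inside the component $K$ (together with the observation, forced by ${}^\Box v\subseteq H_v$, that $\deg_{H_v}(v)=2$), and quoting facts (i) and (ii) in exactly the form used above. If one prefers to avoid citing probability-on-networks results, the same conclusion follows via deletion–contraction: smoothing the degree-$2$ vertex $v$ shows $m_v=\tau(K)/\tau(K\setminus v)=2+\rho$, the series–parallel reduction gives $R_{\mathrm{eff}}(v,b)=\frac{1+\rho}{2+\rho}=\frac{m_v-1}{m_v}$, and the identity $E_v=\bigl(2R_{\mathrm{eff}}(v,b)\bigr)^{-1}$ comes from a one-step harmonic-function computation at $v$; combining these gives $E_v=\frac{m_v}{2(m_v-1)}$, equivalent to the claim.
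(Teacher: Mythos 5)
Your proof is correct, but it takes a genuinely different route from the paper's. The paper also starts from Lemma~\ref{L:P} ($m_v=2/(1-P_v)$), but it evaluates $P_v$ by running the Aldous--Broder algorithm from $a$: $P_v$ is the probability that $b$ is first encountered along $\overline{vb}$, which is expressed through the auxiliary quantity $Q_v$ (the chance the walk from $a$ hits $v$ before $b$) via a geometric series, while a one-step decomposition gives $E_v=\tfrac12+\tfrac12(1-Q_v)$; eliminating $Q_v$ yields $m_v=\frac{2-Q_v}{1-Q_v}=\frac{2E_v}{2E_v-1}$. You instead translate everything into effective resistances: Kirchhoff's edge-inclusion formula $\Prob(e\in T)=R_{\mathrm{eff}}(e)$, the escape-probability identity $E_v=\bigl(\deg(v)\,R_{\mathrm{eff}}(v,b)\bigr)^{-1}$, a series-parallel reduction that is legitimate precisely because $\deg_{H_v}(v)=2$ and $K\setminus v$ is connected, and inclusion-exclusion, since every spanning tree of $K$ must use at least one of the two edges at $v$. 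Both arguments are complete; yours outsources more to standard network identities (the escape-probability formula is in the paper's reference \cite{DS}, and the edge-inclusion formula is the diagonal case of the transfer-current theorem, for which \cite{BP} is a more natural citation than \cite{Lyons}), whereas the paper's version is self-contained modulo Aldous--Broder and needs no resistance formalism. A side benefit of your computation is the clean intermediate identity $m_v=2+\rho$, with $\rho$ the $a$--$b$ resistance in $K\setminus v$, from which the monotonicity exploited in Sections 5 and 6 is transparent via Rayleigh. One small step worth spelling out: the connectivity of $K\setminus v$ holds because any vertex of $K$ reaches $v$ only through $a$ or $b$ (these are $v$'s only neighbors in $H_v$), and $a$ and $b$ are joined by the path through $w$, whose edges lie in $H_v$ because the unit square with bottom-right corner $v$ is contained in $H_v$; you assert this but do not justify it.
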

\begin{proof}
Define $P_v$ as in Lemma~\ref{L:P}.  We can better understand $P_v$ via the Aldous-Broder algorithm for generating a uniformly random spanning tree of a connected graph~\cite{Aldous},\cite{Broder} (Wilson's algorithm from~\cite{Wilson} would also work here).  Their algorithm works as follows.  Start at any vertex and do a simple random walk.  Each time a vertex is first encountered, mark the edge from which it was encountered.  When all vertices have been encountered, the set of marked edges is a uniformly random spanning tree.

We apply the Aldous-Broder algorithm as follows.  Let $\mathcal{W}(a)$ denote a simple random walk starting at $a$ on the connected component of $H_v$ that contains $a$.  Denote this connected component as $H_v^0$, and note that it also contains $v$ and $b$ because $v\notin\hat{\partial} G$.

It is straightforward to see that $P_v$ equals the probability that, in the walk $\mathcal{W}(a)$, the vertex $b$ is first encountered along the edge $\overline{vb}$.  In fact, this is the only way in which the set of marked edges will end up containing both $\overline{va}$ and $\overline{vb}$.

Next let $\mathcal{W}(v)$ denote a simple random walk starting at $v$ on $H_v^0$.  Here is a review of the definitions of $P_v$ and $E_v$ together with a new definition of $Q_v$:
\begin{itemize}
\item $P_v = $ the probability in $\mathcal{W}(a)$ that $b$ is first encountered along $\overline{vb}$.
\item $Q_v = $ the probability in $\mathcal{W}(a)$ of reaching $v$ before reaching $b$.
\item $E_v = $ the probability in $\mathcal{W}(v)$ of reaching $b$ before returning to $v$.
\end{itemize}
Since $E_v$ is the probability of escaping to $b$ on the first step plus the probability of escaping after more than one step, we have:
\begin{equation}\label{E:QE}E_v = \frac 12 + \frac 12(1-Q_v)
\end{equation}

It remains to relate $P_v$ and $Q_v$.  For this, let $P(k)$ denote the probability in $\mathcal{W}(a)$ that $b$ is first encountered along $\overline{vb}$ immediately following the walk's $k^\text{th}$ visit to $v$.  We have:
\begin{equation}\label{E:PQ}
P_v = \sum_{k\geq 1} P(k) = \sum_{k\geq 1} \left(\frac{Q_v}{2}\right)^k = \frac{Q_v}{2-Q_v}
\end{equation}
Combining Equations~\ref{E:QE} and~\ref{E:PQ} with Lemma~\ref{L:P} yields the following expressions for the multiplier:
$$m_v = \frac{2}{1-P_v} = \frac{2-Q_v}{1-Q_v} = \frac{2E_v}{2E_v-1}.$$
\end{proof}

The problem is now reduced to understanding the escape probability $E_v$.  A standard trick in the literature is to bound escape probabilities using Rayleigh's Monotonicity Laws, whose intuition comes from the long studied connection between random walks and electrical circuits.  We recommend~\cite{DS} for an elementary introduction to this connection and to Rayleigh's Laws.  We'll require the following special case:
\begin{prop}[Rayleigh's Monotonicity Law]\label{P:Rayleigh} Let $\tilde{H}$ be a connected graph, let $H$ be a subgraph of $\tilde{H}$, and let $v_0,b_0\in V(H)$ be distinct vertices.  Assume that $H$ contains all edges in $\tilde{H}$ incident to $v_0$.  Let $\tilde{E}$ (respectively $E$) denote the probability that a simple random walk on $\tilde{H}$ (respectively on $H$) starting at $v_0$ ``escapes to $b_0$,'' which means it reaches $b_0$ before returning to $v_0$.  Then $E\leq \tilde{E}$.
\end{prop}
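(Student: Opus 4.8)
The plan is to pass through the standard dictionary between reversible random walks and electrical networks (for which we have already cited~\cite{DS}), which turns escape probabilities into effective conductances and turns the proposition into the most basic instance of Rayleigh monotonicity: \emph{effective conductance can only increase when edges are added.}

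Regard each of $H$ and $\tilde H$ as an electrical network carrying a unit-conductance resistor on every edge. The first step is to recall the classical identity
$$ E = \frac{\mathcal{C}_{\mathrm{eff}}^{H}(v_0,b_0)}{\deg_{H}(v_0)}, \qquad \tilde E = \frac{\mathcal{C}_{\mathrm{eff}}^{\tilde H}(v_0,b_0)}{\deg_{\tilde H}(v_0)}, $$
where $\mathcal{C}_{\mathrm{eff}}=1/\mathcal{R}_{\mathrm{eff}}$ is effective conductance: if $g$ is the potential with $g(v_0)=0$, $g(b_0)=1$ and $g$ harmonic at every other vertex, then conditioning the walk from $v_0$ on its first step expresses the escape probability as $\frac{1}{\deg(v_0)}\sum_{w\sim v_0} g(w)$, while conservation of current identifies $\sum_{w\sim v_0} g(w)$ with the effective conductance. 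The crucial point is that the \emph{two denominators coincide}: the hypothesis that $H$ contains every edge of $\tilde H$ incident to $v_0$ says exactly that $\deg_{H}(v_0)=\deg_{\tilde H}(v_0)=:d$. (If $v_0$ and $b_0$ lie in different components of $H$ then $E=0$ and there is nothing to prove, so we may assume $H$, hence $\tilde H$, connects them.) Consequently the claim $E\le\tilde E$ is equivalent to $\mathcal{C}_{\mathrm{eff}}^{H}(v_0,b_0)\le\mathcal{C}_{\mathrm{eff}}^{\tilde H}(v_0,b_0)$.

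The last inequality is pure Rayleigh monotonicity, which I would get from the Dirichlet variational principle: $\mathcal{C}_{\mathrm{eff}}^{\tilde H}(v_0,b_0)=\min\{\mathcal{E}_{\tilde H}(f):f(v_0)=0,\ f(b_0)=1\}$ (and likewise for $H$), where $\mathcal{E}_{\tilde H}(f)=\sum_{xy\in E(\tilde H)}(f(x)-f(y))^2$ with $f$ ranging over functions on $V(\tilde H)$. If $f^\star$ attains the minimum for $\tilde H$, then its restriction to $V(H)$ is an admissible competitor for the $H$-problem, and its $H$-energy is obtained from $\mathcal{E}_{\tilde H}(f^\star)$ by discarding the nonnegative contributions of the edges in $\tilde H\setminus H$; hence $\mathcal{C}_{\mathrm{eff}}^{H}\le\mathcal{E}_{H}(f^\star)\le\mathcal{E}_{\tilde H}(f^\star)=\mathcal{C}_{\mathrm{eff}}^{\tilde H}$. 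Combining with the previous paragraph gives $E=\mathcal{C}_{\mathrm{eff}}^{H}/d\le\mathcal{C}_{\mathrm{eff}}^{\tilde H}/d=\tilde E$.

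There is no genuinely hard step; the proof is an assembly of textbook facts, and the only place demanding care is the bookkeeping around $\deg(v_0)$ --- the hypothesis on edges incident to $v_0$ is precisely what equalizes the two denominators, and without it the statement is false (take $H$ to be the single edge $\overline{v_0 b_0}$ and $\tilde H$ that edge together with one extra edge joining $v_0$ to a new vertex, so that $E=1>\tfrac12=\tilde E$). One could instead attempt a direct probabilistic coupling of the two walks, but making such an argument rigorous is more delicate than quoting the variational principle, so I would present the electrical-network version.
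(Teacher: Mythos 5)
Your proof is correct. Note, though, that the paper does not actually prove this Proposition: it is quoted as a special case of Rayleigh's Monotonicity Law, with the reader referred to \cite{DS} for the random-walk/electrical-network dictionary. Your argument is exactly the standard one behind that citation: the identity $E=\mathcal{C}_{\mathrm{eff}}(v_0,b_0)/\deg(v_0)$ for unit conductances, the observation that the hypothesis on edges incident to $v_0$ forces $\deg_H(v_0)=\deg_{\tilde H}(v_0)$ so the comparison reduces to monotonicity of effective conductance, and the Dirichlet variational principle (restricting the minimizer for $\tilde H$ and discarding the nonnegative energy of the extra edges) to prove that monotonicity. The degenerate case where $b_0$ is unreachable from $v_0$ in $H$ is handled, and your example with a pendant edge at $v_0$ correctly shows the degree hypothesis cannot be dropped. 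So the proposal supplies a self-contained proof of a fact the paper simply imports, in the spirit the paper intends; nothing is missing.
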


Thus, there is a greater probability of escape on the larger graph than on the smaller subgraph.  In our application of Rayleigh's Law, the smaller graph will be $H_v$, while the larger will be the infinite subgraph, $\U$, of $\mathcal{L}(\Z^2)$ whose vertex set is:
\begin{equation}\label{E:U_def}V(\U) = \{(x,y)\in\Z^2\mid y\geq 1\text{ or }(y=0\text{ and } x\leq 0)\}.
\end{equation}
We can think of $V(\U)$ as the set of points of $\Z^2$ prior to (and including) the origin $\mathbf{0}=(0,0)$ in the words-on-a-page sense.  After applying a translation for notational convenience, we can assume that $v$ is positioned at the origin; that is, we can assume that $v=\mathbf{0}=(0,0)$, $a=(0,1)$, and $b=(-1,0)$.  With this understanding, $H_v$ is a subgraph of $\U$.

\begin{lem}\label{L:N2} Let $E(\infty)$ denote the probability that a simple random walk on $\U$ starting at $v=(0,0)$ escapes to $b=(-1,0)$.  Then $E(\infty) = \frac{\bm}{2(\bm-1)}$.
\end{lem}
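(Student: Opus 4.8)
The plan is to pin down $E(\infty)$ by feeding the bulk limit \eqref{E:Bulklim} into the multiplier decomposition $\tau(G)=\prod_{v}m_v$, using the identity $m_v=\frac{2E_v}{2E_v-1}$ from Lemma~\ref{L:E}. Write $f(E)=\frac{2E}{2E-1}$, which is continuous and strictly decreasing on $(\tfrac12,\infty)$, with $f>1$ on $(\tfrac12,1)$; solving $f(E)=\bm$ gives exactly $E=\frac{\bm}{2(\bm-1)}$. So it suffices to prove $\ln f(E(\infty))=\tfrac{4C}{\pi}=\ln\bm$.

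First I would apply the decomposition to the nested squares $G(k)=[-k,k]^2\cap\Z^2$, which exhaust $\mathcal{L}(\Z^2)$ in the manner to which \eqref{E:Bulklim} applies, so $\frac{1}{|V(G(k))|}\ln\tau(G(k))\ra\ln\bm$. Fix $R\geq 2$, and let $\U_R$ be the subgraph of $\U$ induced by $V(\U)\cap[-R,R]^2$. Call $v\in V(G(k))$ \emph{$R$-deep} if, after the translation placing $v$ at $\mathbf{0}$, the graph $H_v$ contains $\U_R$; a vertex can fail to be $R$-deep only if it lies within distance $R$ of the top, left, or right side of the square, so all but at most $O(Rk)$ vertices are $R$-deep. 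For an $R$-deep $v$ one has $\U_R\subseteq H_v\subseteq\U$, and in each of these three graphs $v$ is incident to precisely the two edges $\overline{va}$ and $\overline{vb}$; two applications of Rayleigh's Monotonicity Law (Proposition~\ref{P:Rayleigh}) then give $E_{\U_R}\leq E_v\leq E(\infty)$, hence $f(E(\infty))\leq m_v\leq f(E_{\U_R})$. For the remaining $O(Rk)$ vertices I only need the crude bounds $1\leq m_v\leq 17$: if $v\in\hB G(k)$ then $m_v=1$ by Lemma~\ref{L:toplefty} (since $G(k)$ is simple), while if $v\notin\hB G(k)$ then ${}^\Box v\subseteq H_v$, so a random walk from $a$ follows the length-two path through the top-left corner of ${}^\Box v$ to $b$ with probability at least $\tfrac1{16}$, forcing $E_v\geq\tfrac{17}{32}$ and hence $m_v=f(E_v)\leq 17$.

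Writing $N_R(k)$ for the number of $R$-deep vertices, so $N_R(k)/|V(G(k))|\ra 1$ as $k\ra\infty$, the decomposition $\ln\tau(G(k))=\sum_{v}\ln m_v$ gives
\begin{equation*}
N_R(k)\ln f(E(\infty))\ \leq\ \ln\tau(G(k))\ \leq\ N_R(k)\ln f(E_{\U_R})+(|V(G(k))|-N_R(k))\ln 17.
\end{equation*}
Dividing by $|V(G(k))|$ and letting $k\ra\infty$ yields $\ln f(E(\infty))\leq\ln\bm\leq\ln f(E_{\U_R})$ for every $R$. Now let $R\ra\infty$: on one hand $E_{\U_R}\leq E(\infty)$ by Rayleigh; on the other hand, the probability (computed in $\U$) of reaching $b$ before returning to $v$ without ever leaving $[-(R-1),R-1]^2$ equals the same probability computed in $\U_R$, is at most $E_{\U_R}$, and increases to $E(\infty)$ by continuity of probability. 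Hence $E_{\U_R}\ra E(\infty)$, and since $f$ is continuous we get $\ln\bm\leq\ln f(E(\infty))$. Therefore $\ln\bm=\ln f(E(\infty))$, which rearranges to $E(\infty)=\frac{\bm}{2(\bm-1)}$.

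The step I expect to be the main nuisance is not any single estimate but the coordination of the two limits: making the ``$R$-deep'' notion precise, checking that the edge-at-$v$ hypothesis of Rayleigh's Law holds simultaneously in $\U_R$, $H_v$ and $\U$, bounding the $O(Rk)$ non-deep vertices honestly, and verifying $E_{\U_R}\ra E(\infty)$. The argument works only because the number of non-deep vertices is $O(Rk)=o(|V(G(k))|)$, so that in the limit the uniform bulk value $f(E(\infty))$ is pinned exactly to $\bm$.
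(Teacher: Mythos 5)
Your proof is correct and follows essentially the paper's own route: the paper obtains this lemma by combining Lemma~\ref{L:Fnonicr} (whose proof is precisely your idea of squeezing the multiplier decomposition on large squares against the bulk limit~\ref{E:Bulklim}, using Rayleigh comparisons with truncated copies of $\U$ and the value $m_v=1$ on $\hB$) with the exhaustion identity $E(\infty)=\lim_{k\ra\infty}E(k)$. The differences are minor: you use box truncations $\U_R$ and a direct two-sided squeeze in place of the paper's $\U(k)$ and its two contradiction arguments, and you additionally supply the trajectory argument showing $E_{\U_R}\ra E(\infty)$, a limit-exchange step the paper asserts without proof.
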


We postpone the proof of Lemma~\ref{L:N2} until the end of the next section.  For now, we will use the lemma to finish off Proposition~\ref{P:lower_m_bound} and hence also Theorem~\ref{T:lower}.

\begin{proof}[Proof of Proposition~\ref{P:lower_m_bound}]
Rayleigh's Law gives $E_v\leq E(\infty)= \frac{\bm}{2(\bm-1)}$.
Note that $E_v>1/2$ because there is a probability $1/2$ of escaping to $b$ in the first step.  On the domain $E_v>1/2$, the function $m_v = \frac{2E_v}{2E_v-1}$ is decreasing.  Therefore $m_v\geq \frac{2E(\infty)}{2E(\infty) - 1}=\bm$.
\end{proof}

\begin{proof}[Proof of Theorem~\ref{T:lower}]
$$\tau(G) = \prod_{v\in V(G)} m_v \geq \prod_{v\in V(G)-\hB G} m_v
\geq \bm^m,$$
where $m = |V(G)|-|\hB G|$.
\end{proof}

\section{An upper bound on $\tau(G)$}
The goal of this section is to prove the upper bound of Theorem~\ref{T:main}. Figure~\ref{F:counterexample} shows that this upper bound is false in the non-simple case (with $m$ re-expressed in terms of $\hB G$ as in Theorem~\ref{T:lower}).
This upper bound will follow immediately from Lemma~\ref{L:toplefty} together with the following:

\begin{prop}\label{C:1} If $G$ is a simple grid graph and $v\in V(G)$ with $v\notin\hB G$, then $m_v\leq 4$
\end{prop}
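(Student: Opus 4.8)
The plan is to reuse the random-walk machinery from Section 5, now seeking an \emph{upper} bound on the multiplier $m_v$ for $v\notin\hat{\partial}G$. By Lemma~\ref{L:E} we have $m_v = \frac{2E_v}{2E_v-1}$, and since this is a decreasing function of $E_v$ on $(1/2,\infty)$, an upper bound on $m_v$ is equivalent to a \emph{lower} bound on the escape probability $E_v$. Concretely, $m_v\leq 4$ is equivalent to $E_v\geq \tfrac23$. So the entire task reduces to showing: if $G$ is simple and $v\notin\hat{\partial}G$, then a simple random walk on $H_v^0$ starting at $v$ reaches $b=(x-1,y)$ before returning to $v$ with probability at least $2/3$.

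To get a lower bound on escape probability, the natural tool is again Rayleigh's Monotonicity Law, but applied in the opposite direction: now I want to pass from $H_v$ \emph{down} to a subgraph on which the escape probability is easy to compute and is at least $2/3$. The key structural input is that $v\notin\hat{\partial}G$ means ${}^\Box v$ is a subgraph of $G$, hence of $H_v$; that is, the unit square with corners $v=(x,y)$, $a=(x,y+1)$, $b=(x-1,y)$ and $w=(x-1,y+1)$ sits inside $H_v$, and all four of its edges have been added by the time we place $v$. So $H_v$ contains the $4$-cycle $v$–$a$–$w$–$b$–$v$. I would apply Proposition~\ref{P:Rayleigh} with $\tilde H = H_v^0$, $v_0 = v$, $b_0 = b$, and $H$ equal to this $4$-cycle together with the two edges $\overline{va}$ and $\overline{vb}$ — i.e.\ just the square itself. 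One must check the hypothesis that $H$ contains all edges of $H_v^0$ incident to $v$: this holds because in the words-on-a-page construction the only neighbors of $v$ that can already be present in $H_v$ are $a$ and $b$ (the right and bottom neighbors of $v$ come later), and both of those edges lie in the square. Rayleigh then gives $E_v \geq E_{\square}$, where $E_\square$ is the escape probability on the $4$-cycle.

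The remaining step is the elementary computation of $E_\square$: the probability that a simple random walk on the $4$-cycle $v$–$a$–$w$–$b$, started at $v$, reaches $b$ before returning to $v$. From $v$ the walk steps to $a$ or $b$ with probability $1/2$ each; if it steps to $b$ it has escaped. If it steps to $a$, then from $a$ (degree $2$ in the cycle, neighbors $v$ and $w$) it goes back to $v$ (failure) or on to $w$ (degree $2$, neighbors $a$ and $b$) each with probability $1/2$; from $w$ it reaches $b$ (success) or returns to $a$ with probability $1/2$ each. A one-line gambler's-ruin-style calculation on the path $v$–$a$–$w$–$b$ (a random walk on four points in a line, absorbed at the ends) gives that starting from $a$ the walk reaches $b$ before $v$ with probability $1/3$, so $E_\square = \tfrac12 + \tfrac12\cdot\tfrac13 = \tfrac23$. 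Hence $E_v\geq \tfrac23$ and $m_v = \frac{2E_v}{2E_v-1}\leq 4$.

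I do not expect a serious obstacle here; the one point that needs genuine care is verifying that the chosen subgraph $H$ really contains \emph{all} edges of $H_v^0$ incident to $v$, which is exactly what makes Rayleigh's Law applicable in the form stated in Proposition~\ref{P:Rayleigh} — and this is precisely where the words-on-a-page ordering does its work, since it guarantees the right and bottom neighbors of $v$ (and the edges to them) are absent from $H_v$. It is worth noting that simplicity of $G$ is not actually needed for this particular proposition — only $v\notin\hat\partial G$ is used — which is consistent with the paper's remark that the difficulty in the non-simple case lies in Lemma~\ref{L:toplefty} (the behavior on $\hat\partial G$), not in the bound on interior multipliers. I would close with the one-sentence deduction of the upper bound in Theorem~\ref{T:main}: $\tau(G) = \prod_{v} m_v = \prod_{v\in\hat\partial G} m_v \cdot \prod_{v\notin\hat\partial G} m_v \leq 1^{|\hat\partial G|}\cdot 4^{m} = 4^m$, using Lemma~\ref{L:toplefty} and Proposition~\ref{C:1}.
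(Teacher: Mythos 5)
Your proof is correct and is essentially the paper's argument: the paper also bounds $m_v$ by applying Lemma~\ref{L:E} and Rayleigh's Monotonicity Law, comparing $H_v$ to the graph $\U(2)$, which is exactly the unit $4$-cycle ${}^\Box v$ you use, and the value $F(2)=4$ in the paper is precisely your escape probability $E_\square=\tfrac23$ converted via $m_v=\frac{2E_v}{2E_v-1}$. The only cosmetic differences are that you compute $E_\square$ directly by a gambler's-ruin calculation instead of the absorbing-chain matrix formula, and your (accurate) observation that simplicity of $G$ is not needed here matches the paper, whose Lemma~\ref{L:F_def} is stated for general grid graphs.
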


We will prove this proposition (and more general upper bounds on $m_v$) via Rayleigh's Law by comparing $H_v$ to a smaller subgraph constructed as follows.

For each integer $k\geq 1$, define $\tilde{\U}(k)$ to be the subgraph of $\U$ (from Equation~\ref{E:U_def}) induced by all vertices within distance $k$ from $\mathbf{0}$, and then obtain $\U(k)$ from $\tilde{\U}(k)$ by removing all vertices of degree $1$ and their adjacent edges.  That is,
\begin{gather*}
V(\tilde{\U}(k))  = \{p\in V(\mathcal{U})\mid\text{dist}(\mathbf{0},p)\leq k\},\\
V(\U(k))  = \{p\in V(\tilde{\U}(k))\mid \text{degree}(p)\neq 1\},
\end{gather*}
where ``dist'' is the edge distance of the graph. The first few are shown in Figure~\ref{F:distspheres}.

\begin{figure}[ht!]\centering
   \scalebox{.30}{\includegraphics{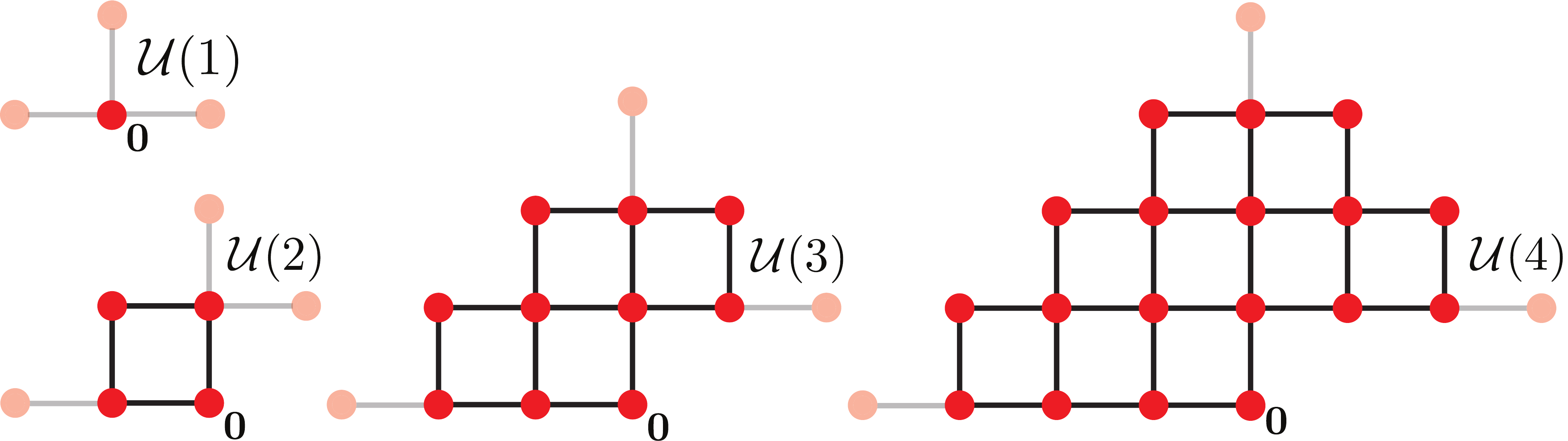}}
\caption{$\U(k)$ for $k\in\{1,2,3,4\}$.  The transparent vertices and edges belong to $\tilde{\U}(k)$ but not $\U(k)$.}\label{F:distspheres}
   \end{figure}

Assume for the remainder of the section that $G$ is a grid graph and $v\in V(G)$.  As in the previous section, assume (after applying a translation) that $v=\mathbf{0}=(0,0)$ so that $H_v\subset\U$. Define:
$$d_v = \max\{k\mid \U(k)\subset H_v\}.$$
Notice that $v\in\hB G$ if and only if $d_v=1$.

\begin{lem}\label{L:F_def} If $v\notin\hB G$ (or equivalently if $d_v\geq 2$), then $m_v\leq F(d_v)$, where $F$ is a function explicitly defined in the proof below, whose first few values are given in Table~\ref{T:F}.
\end{lem}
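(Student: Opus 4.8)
The plan is to define $F$ in terms of escape probabilities on the graphs $\U(k)$ and then to invoke Rayleigh's Monotonicity Law exactly as in Section~5, but with the roles of the ``larger'' and ``smaller'' graph interchanged. Concretely, for each integer $k\geq 2$ let $E^{(k)}$ denote the probability that a simple random walk on $\U(k)$ started at $v=\mathbf{0}$ reaches $b=(-1,0)$ before returning to $v$, and set
$$F(k) = \frac{2E^{(k)}}{2E^{(k)}-1}.$$
First I would record that this is meaningful: in $\U$, and hence in every $\U(k)$, the vertex $\mathbf{0}$ has exactly the two neighbours $a=(0,1)$ and $b$, so the walk escapes to $b$ on its first step with probability $\tfrac12$; and for $k\geq 2$ the graph $\U(k)$ is connected and contains a path from $a$ to $b$ avoiding $\mathbf{0}$, which forces $\tfrac12<E^{(k)}<1$ and so $F(k)\in(2,\infty)$. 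The entries of Table~\ref{T:F} are then obtained by solving a finite linear system: if $h_k\colon V(\U(k))\to[0,1]$ is the unique function harmonic at every vertex other than $\mathbf{0}$ and $b$ with $h_k(\mathbf{0})=0$ and $h_k(b)=1$, then $E^{(k)}=\tfrac12+\tfrac12 h_k(a)$, equivalently $F(k)=1+1/h_k(a)=1/(1-R_k)$, where $R_k$ is the effective resistance between $\mathbf{0}$ and $b$ in $\U(k)$ with unit resistors. For example $\U(2)$ is the $4$-cycle on $\{\mathbf{0},a,(-1,1),b\}$, for which $h_2(a)=\tfrac13$, so $E^{(2)}=\tfrac23$ and $F(2)=4$.

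With $F$ so defined, the estimate is short. Since $v\notin\hB G$ is equivalent to $d_v\geq 2$, the definition of $d_v$ gives $\U(d_v)\subset H_v$, and since $\U(d_v)$ is connected and contains $\mathbf{0}$, it lies inside $H_v^0$, the connected component of $H_v$ containing $\mathbf{0}$ (this is the graph on which $E_v$ of Lemma~\ref{L:E} is defined). The only edges of $H_v^0$ incident to $\mathbf{0}$ are $\overline{va}$ and $\overline{vb}$, because $H_v^0\subseteq\U$ and $\mathbf{0}$ has only those two neighbours in $\U$; and both of these edges already lie in $\U(2)\subseteq\U(d_v)$. Hence Proposition~\ref{P:Rayleigh} applies with $\tilde H=H_v^0$, $H=\U(d_v)$, $v_0=\mathbf{0}$, $b_0=b$, and yields $E^{(d_v)}\leq E_v$. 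Finally $t\mapsto 2t/(2t-1)$ is strictly decreasing on $(\tfrac12,1]$, so from $\tfrac12<E^{(d_v)}\leq E_v$ together with Lemma~\ref{L:E} we obtain $m_v=\frac{2E_v}{2E_v-1}\leq\frac{2E^{(d_v)}}{2E^{(d_v)}-1}=F(d_v)$, as claimed.

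The only point I expect to require genuine care is the verification that Rayleigh's Law applies — that is, that $\U(d_v)$ contains every edge of $H_v^0$ at $\mathbf{0}$ and is itself connected and contains $b$ — rather than any delicate estimate; granting that, the argument is a mechanical dual of the lower-bound proof of Section~5. The remaining work is the bookkeeping behind Table~\ref{T:F}: for each small $k$ one reduces the finite network $\U(k)$ by series–parallel moves (or solves the harmonic system above) to get $R_k$ and hence $F(k)$, and one can cross-check the entries against the facts that $F$ is decreasing in $k$ and that $F(k)\to\bm$ as $k\to\infty$ by Lemma~\ref{L:N2}.
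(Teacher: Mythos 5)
Your proposal is correct and follows essentially the same route as the paper: define $F(k)$ via the escape probability from $v$ to $b$ on $\U(k)$, apply Rayleigh's Monotonicity Law with $\U(d_v)\subset H_v$ (noting the only edges at $\mathbf{0}$ are $\overline{va},\overline{vb}$, both in $\U(2)$) to get $E(d_v)\leq E_v$, and conclude from the monotonicity of $t\mapsto 2t/(2t-1)$ together with Lemma~\ref{L:E}. The only cosmetic difference is that you compute the table entries via harmonic functions/effective resistance rather than the absorbing Markov chain matrix $(\mathbf{I}-\mathbf{Q})^{-1}\mathbf{R}$ used in the paper, which is an equivalent linear-algebra computation.
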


\begin{table}[ht]
\caption{Some values of $F$ rounded to 4 decimals}
\centering
\begin{tabular}{c c}
\hline\hline
$k$ & $F(k)$  \\ [0.5ex]
\hline
2 & 4  \\
3 & 3.4833  \\
4 & 3.3486  \\
5 & 3.2936  \\
\vdots & \vdots  \\ [1ex]
12 & 3.2193 \\
 & $\downarrow$ \\
  & $\mathfrak{b}\approx 3.2099$\\
\hline
\end{tabular}
\label{T:F}
\end{table}
\begin{proof}
Set $k=d_v$.  Define $Q_v$ and $E_v$ as in the proof of Lemma~\ref{L:E}, in which are found the relations:
$$m_v = \frac{2E_v}{2E_v-1}= \frac{2-Q_v}{1-Q_v}.$$

Analogously define $Q(k)$ and $E(k)$ with respect to random walks in $\U(k)$; that is:
\begin{itemize}
\item $Q(k)$ is the probability that a simple random walk in $\U(k)$ starting at $a$ reaches $v$ before reaching $b$.
\item $E(k)$ is the probability that a simple random walk in $\U(k)$ starting at $v$ escapes to $b$.
\end{itemize}
Define
$$F(k) = \frac{2E(k)}{2E(k)-1}=\frac{2-Q(k)}{1-Q(k)}.$$

Rayleigh's Monotonicity Law implies that $E_v\geq E(k)$ and therefore that $m_v\leq F(k)$.

To explicitly compute $F(k)$, it will suffice to compute $Q(k)$ via the method of~\cite[Section 1.2.6]{DS}, which we briefly review here.  Regard the random walk on $\U(k)$ starting at $a$ as an absorbing Markov chain with absorbing states $\{v,b\}$.  Index the vertices of $\U(k)$ with these absorbing states listed first, so the transition matrix of the Markov chain has the block form
$\left( \begin{matrix}\mathbf{I} & \mathbf{0}\\ \mathbf{R} & \mathbf{Q} \end{matrix}\right).$ The absorbtion probabilities are given by $\mathbf{B} = (\mathbf{I}-\mathbf{Q})^{-1}\mathbf{R}$.  Thus $Q(k)$ equals the entry of $\mathbf{B}$ whose column corresponds to the absorbing state $v$ and whose row corresponds to the non-absorbing state $a$.
\end{proof}

The function $F:\{2,3,...\}\ra\mathbb{R}$ defined in the previous proof has the following properties.
\begin{lem}\label{L:Fnonicr}
$F$ is non-increasing, and $\lim_{k\ra\infty}F(k) = \mathfrak{b}$.
\end{lem}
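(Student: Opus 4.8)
\textbf{Proof proposal for Lemma~\ref{L:Fnonicr}.}

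The plan is to prove the two assertions separately, using the probabilistic interpretation of $F$ rather than the matrix formula. Recall from the proof of Lemma~\ref{L:F_def} that $F(k) = \frac{2E(k)}{2E(k)-1}$, where $E(k)$ is the escape probability from $v=\mathbf{0}$ to $b=(-1,0)$ for a simple random walk in $\U(k)$; since the map $t\mapsto \frac{2t}{2t-1}$ is strictly decreasing on $(1/2,\infty)$ and each $E(k)>1/2$ (the first step escapes to $b$ with probability $1/2$), it suffices to show that $E(k)$ is \emph{non-decreasing} in $k$ and that $E(k)\to E(\infty)=\frac{\bm}{2(\bm-1)}$, the latter being precisely the content of Lemma~\ref{L:N2}.

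For monotonicity, the key observation is that $\U(k)$ is essentially an induced subgraph of $\U(k+1)$: by construction $V(\tilde\U(k))\subset V(\tilde\U(k+1))$, and deleting degree-one vertices only removes "dead ends" that a random walk escaping to $b$ cannot use anyway. More precisely, I would check that $\U(k)$ is a subgraph of $\U(k+1)$ containing all edges of $\U(k+1)$ incident to $v$ — the only subtlety is verifying that no vertex of $\U(k)$ which had degree $\geq 2$ in $\tilde\U(k)$ is a vertex that got deleted when forming $\U(k+1)$ from $\tilde\U(k+1)$; but adding the distance-$(k+1)$ shell can only raise degrees, so every vertex surviving into $\U(k)$ also survives into $\U(k+1)$, and $v=\mathbf{0}$ has all four (well, the relevant) lattice neighbors present in both. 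Then Rayleigh's Monotonicity Law (Proposition~\ref{P:Rayleigh}), applied with $H=\U(k)$ and $\tilde H=\U(k+1)$, gives $E(k)\leq E(k+1)$, hence $F(k)\geq F(k+1)$.

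For the limit, I would argue that $\lim_{k\to\infty} E(k) = E(\infty)$, where $E(\infty)$ is the escape probability in the full graph $\U$ from Lemma~\ref{L:N2}. One direction, $\limsup_k E(k)\le E(\infty)$, is immediate from Rayleigh's Law since each $\U(k)$ is a subgraph of $\U$ containing all edges at $v$. For the reverse inequality, I would use a standard truncation/exhaustion argument: the event "escape to $b$ before returning to $v$" in $\U$ is, up to a null set, an increasing union over $k$ of the events "escape to $b$ before returning to $v$ using only vertices within distance $k$ of $\mathbf{0}$", and on that latter event the walk in $\U(k)$ behaves identically (the deleted degree-one vertices are never visited on such a path, since a path from $v$ to $b$ passing through a degree-one vertex would have to immediately backtrack, and one can excise such excursions without affecting whether $b$ is reached before $v$). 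Taking $k\to\infty$ and invoking continuity of probability along increasing unions yields $E(\infty)\le\liminf_k E(k)$. Combining, $E(k)\to E(\infty)$, so $F(k)\to \frac{2E(\infty)}{2E(\infty)-1} = \bm$ by the computation already carried out in the proof of Proposition~\ref{P:lower_m_bound}.

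The main obstacle I anticipate is the bookkeeping in the truncation argument for the limit — specifically, making rigorous the claim that excursions into the deleted degree-one vertices are irrelevant, and that a random walk in $\U$ which escapes to $b$ does so, with probability approaching $1$, without ever leaving a bounded neighborhood of the origin. This last point requires knowing the walk is recurrent-like enough near $b$, or more simply that the escape event is a tail-trivial increasing limit; it is true but deserves a careful sentence. The monotonicity direction, by contrast, should be essentially a one-line application of Rayleigh once the subgraph containment is checked, and the $\limsup$ bound is free.
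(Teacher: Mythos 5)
Your monotonicity argument is exactly the paper's: $\U(k)\subset\U(k+1)$ plus Rayleigh's Law, and your extra check that no surviving vertex of $\U(k)$ gets deleted in forming $\U(k+1)$ is a reasonable detail to include. The problem is the limit. You reduce $\lim_k F(k)=\bm$ to the statement $E(k)\to E(\infty)=\frac{\bm}{2(\bm-1)}$ and cite Lemma~\ref{L:N2} for the value of $E(\infty)$. But in the paper Lemma~\ref{L:N2} is only \emph{stated} in Section 5; its proof is explicitly postponed and is carried out at the end of Section 6 \emph{using} Lemma~\ref{L:Fnonicr}, via precisely the computation $E(\infty)=\lim_k E(k)=\lim_k\frac{F(k)}{2(F(k)-1)}=\frac{\bm}{2(\bm-1)}$. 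So invoking Lemma~\ref{L:N2} here is circular: nothing in the paper (or in your proposal) computes the escape probability on $\U$ independently, and there is no off-the-shelf evaluation of it --- the appearance of $\exp(4C/\pi)$ comes exactly from the tree-entropy/bulk-limit connection.

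The paper closes this gap by proving $\lim_k F(k)=\bm$ directly from the bulk-limit formula (Equation~\ref{E:Bulklim}), with no reference to $E(\infty)$: taking large squares $S(n)$, if $\lim_k F(k)<\bm-\epsilon$ then all but a vanishing fraction of vertices of $S(n)$ would have multiplier below $\bm-\epsilon$ while the rest are bounded by $4$, forcing $\frac{\ln\tau(S(n))}{|V(S(n))|}$ to stay below $\ln\bm$; if instead $F(k)>\bm+\epsilon$ for all $k$, then every vertex of $S(n)$ outside $\hat{\partial}S(n)$ has multiplier exceeding $\bm+\epsilon$ and the boundary fraction vanishes, forcing the ratio above $\ln\bm$. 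Either way Equation~\ref{E:Bulklim} is contradicted. If you want to keep your structure, your exhaustion argument that $E(k)\to E(\infty)$ is fine (with the care you already flag about transition probabilities near the truncation radius, e.g.\ restricting to walks staying at distance at most $k-2$), but it only shows the limit of $F(k)$ exists and equals $\frac{2E(\infty)}{2E(\infty)-1}$; you would still need the paper's bulk-limit argument (or an equivalent independent computation) to identify that limit as $\bm$, which is the actual content of the lemma.
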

\begin{proof}
The assertion that $F$ is non-increasing follows immediately from Rayleigh's Monotonicity Law because $\U(k)\subset\U(k+1)$.

The limit claim will come from Equation~\ref{E:Bulklim}.  If $S(1)\subset S(2)\subset\cdots$ is a sequence of concentric squares in $\mathcal{L}(\Z^2)$ centered at $\mathbf{0}$ with diameters going to infinity, then Equation~\ref{E:Bulklim} implies
\begin{equation}\label{E:bulklim}
\lim_{n\ra\infty}\frac{\ln(\tau(S(n)))}{|V(S(n))|} = \ln\bm.
\end{equation}
We will argue that any value for $\lim_{k\ra\infty}F(k)$ different from $\mathfrak{b}$ would contradict Equation~\ref{E:bulklim}.

For this, first suppose that $\lim_{k\ra\infty}F(k)$ were strictly \emph{less} than $\bm$, so there would exist $k_0\in\N$ and $\epsilon>0$ such that $F(k_0)<\bm-\epsilon$.  But then for every $n$, every vertex $v_0\in V(S(n))$ at distance more than $k_0$ from the relevant edges of $S(n)$ (left, top, right) would have multiplier value $m_{v_0}<\bm-\epsilon$.  The fraction of vertices $v_0\in V(S(n))$ to which this applies approaches $100\%$ as $n$ grows.  The vertices to which this doesn't apply can't compensate because their multiplier values are all bounded above by $4$.  This contradicts Equation~\ref{E:bulklim}.

Next suppose that $\lim_{k\ra\infty}F(k)$ were strictly \emph{greater} than $\bm$, so there exists $\epsilon>0$ such that $F(k)>\bm+\epsilon$ for all $k\geq 2$.  For every fixed $n$, it is possible to choose $k$ sufficiently large so that for every $v_0\in S(n)$, $\U(k)$ contains $H_{v_0}$ and hence (assuming $v_0\notin\hat{\partial} S(n)$) we have $m_{v_0}\geq F(k)>\bm+\epsilon$ (by the arguments of Section 5, with $\U(k)$ replacing $\U$).  Thus, for every fixed $n$, every vertex $v_0\in S(n)$ has multiplier $m_{v_0}$ equal to $1$ if $v_0\in\hat{\partial} S(n)$, or greater than $\bm+\epsilon$ if $v_0\notin\hat{\partial} S(n)$.  Since the fraction of vertices in $\hat{\partial} S(n)$ goes to zero and $n$ goes to infinity, this contradicts Equation~\ref{E:bulklim}.
\end{proof}

The value $F(2)=4$ in Table~\ref{T:F} is exact (not rounded). Proposition~\ref{C:1} is an immediate consequence of this value.

We now use Lemma~\ref{L:Fnonicr} to fill in a missing proof from Section 5.

\begin{proof}[Proof of Lemma~\ref{L:N2}]
Let $E(k)$ denote the probability that a simple random walk starting at $v=(0,0)$ on $\mathcal{U}(k)$ escapes to $b=(-1,0)$.  Then
$$E(\infty) = \lim_{k\ra\infty} E(k) = \lim_{k\ra\infty} \frac{F(k)}{2(F(k)-1)}= \frac{\bm}{2(\bm-1)}.$$
\end{proof}

Finally, we prove the upper bound of Theorem~\ref{T:main} as a quick consequence of Lemma~\ref{L:toplefty} and Proposition~\ref{C:1}.
\begin{proof}[Proof of upper bound of Theorem~\ref{T:main}]
$$\tau(G) = \prod_{v\in V(G)} m_v = \prod_{v\in V(G)-\hB G} m_v
\leq 4^m,$$
where $m = |V(G)|-|\hB G| = |V(G)|-\frac 12 |\partial G|-1$ (by Lemma~\ref{L:count}).

\end{proof}

An improvement on the upper bound of Theorem~\ref{T:main} can be obtained by considering the level sets of $d$.  That is, for each $k\geq 1$ define:
$$G^k = \{v\in V(G)\mid d_v=k\}.$$
Notice that $G^1 = \hB G$.  Assuming that $G$ is simple, Lemma~\ref{L:F_def} gives:
\begin{equation}\label{E:last}\ln(\tau(G))\leq\sum_{k\geq 2} \ln(F(k))\cdot|G^k|.\end{equation}

Equation~\ref{E:last} is stronger than the upper bound of Theorem~\ref{T:main}, but is it is not clear whether Equation~\ref{E:last} is necessarily stronger for all simple grid graphs than Theorem~\ref{T:Lyons}.
\section{Application to redistricting}
In this section, we apply our results and techniques to shed light on the redistricting question mentioned in the introduction: why does a map's cut-edge count have such a strong negative correlation with the \emph{spanning tree score}, which means the log of the product of the numbers of spanning trees of its districts?  Independent work by Procaccia and Tucker-Folz relate these two measurements for general planar graphs~\cite{JamieAriel}; in the case of grid graphs, our results are complimentary to theirs.

A common starting point of modern redistricting models is a graph $G$ whose vertices represent the precincts of a state.  Two vertices are connected by an edge if the corresponding precincts share a geographic boundary with non-zero length.  A \emph{map} is a partition of $G$ into subgraphs called \emph{districts}, which are required to satisfy certain legal requirements.

To shed light on the general situation, we will study the special case of simple grid graphs.  More precisely, let $G$ denote a simple grid graph, let $\{V_1,...,V_K\}$ denote a partition of $V(G)$, and let $\{G_1,...,G_K\}$ denote subgraphs of $G$ induced by these vertex sets, which we call \emph{districts}.  We assume that each $G_i$ is a simple grid graph.

Let $\C$ denote the set of \emph{cut edges}, which means the edges between pairs of vertices of $G$ that belong to different districts.  The value $|\C|$ is frequently used as a discrete measurement of the map's overall compactness; see~\cite{DT} for advantages of this measurement compared to other compactness measurements.  Figure~\ref{F:scatter} exhibits a very strong negative correlation between $|\C|$ and the spanning tree score $\ln\left( \prod \tau(G_i) \right)$ for a ensemble of 1000 partitions of the square with $30^2$ vertices into $9$ districts.  This ensemble was created with the ReCom algorithm~\cite{Recom} with $5\%$ population deviation using the pictured tic-tac-toe arrangement as the initial partition.   Our goal is to account for this negative correlation.

\begin{figure}[ht!]\centering
   \scalebox{.85}{\includegraphics{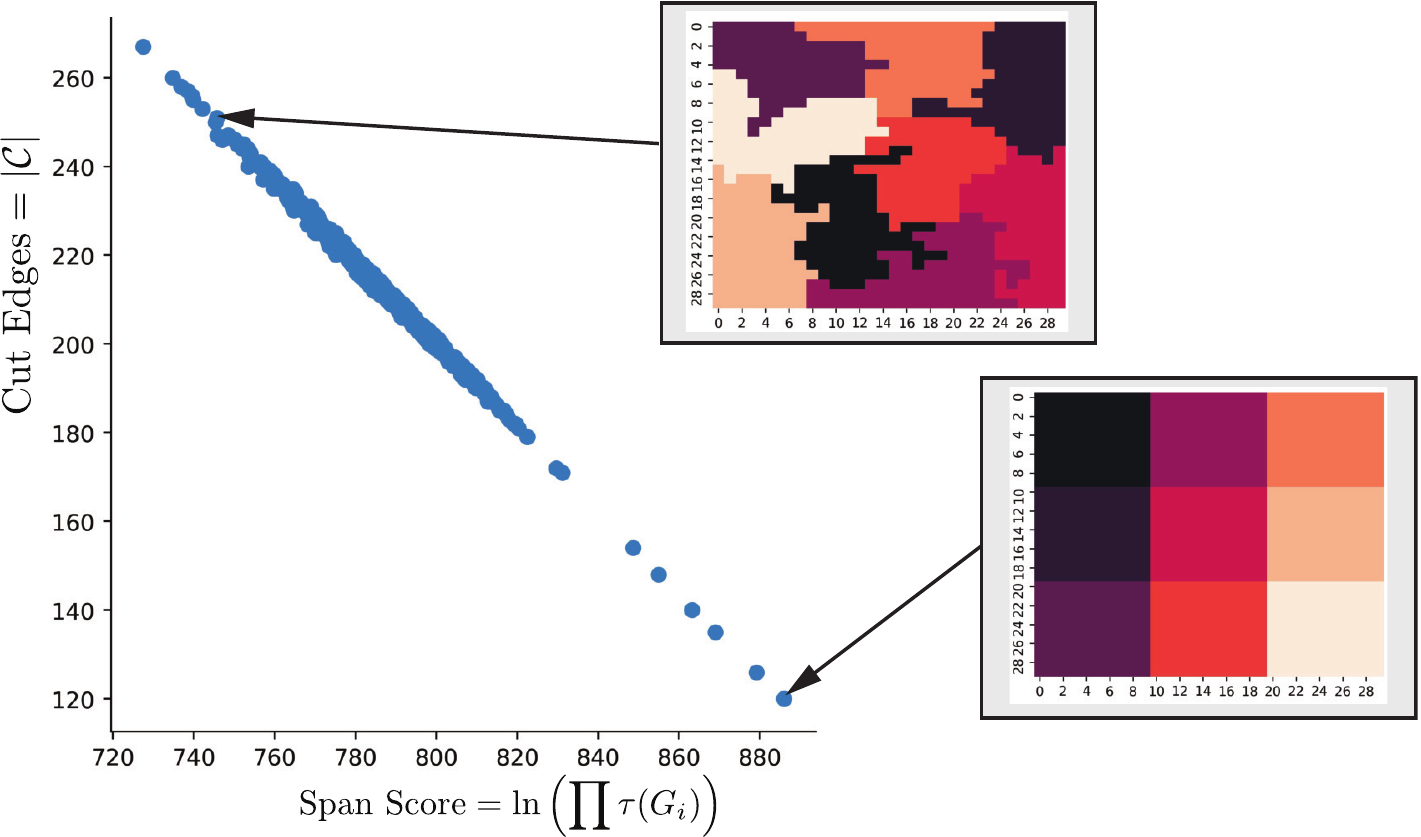}}
\caption{Spanning tree score vs. cut edge count for an ensemble of 1000 partitions of the square with $30^2$ vertices into $9$ districts}\label{F:scatter}
   \end{figure}

The main result of this section is:
\begin{prop}\label{P:redistrict}
$$|\C| =\underbrace{\text{Area}(G) +K-1}_{\text{denote as }C_1} - \sum_{i=1}^K \text{Area}(G_i).$$
\end{prop}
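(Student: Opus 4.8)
The plan is to prove the identity by a counting argument that relates areas, vertex counts, boundary lengths, and cut edges, using the area formulas already established in Proposition~\ref{P:threeint}. The key observation is that $\text{Area}(G)$ counts the faces of $G$, and every face of $G$ is a unit square of $\mathcal{L}(\Z^2)$ lying inside the bounding loop of $G$. Since the districts $G_i$ are simple grid graphs whose vertex sets partition $V(G)$, and $G$ is simple, every such unit square either lies inside exactly one district (if all four of its corners lie in a common $V_i$) or straddles a district boundary. So a natural first step is to classify the faces of $G$ according to whether they are faces of some $G_i$ or not, giving $\text{Area}(G) = \sum_i \text{Area}(G_i) + (\text{number of faces of } G \text{ not internal to any district})$.

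The real content is then to show that the number of faces of $G$ not internal to any single district equals $|\C| - (K-1)$. Rather than attack this face count directly, I would instead use the three-way expression for area from Proposition~\ref{P:threeint} and convert everything into vertex and boundary counts, where an Euler-characteristic / Pick-type bookkeeping is cleaner. Specifically, apply $\text{Area}(H) = |V(H)| - \frac12|\partial H| - 1$ to $G$ and to each $G_i$. Summing over $i$ and using $\sum_i |V(G_i)| = |V(G)|$ (the partition property), the vertex terms telescope and we are left with
\begin{equation*}
\text{Area}(G) - \sum_{i=1}^K \text{Area}(G_i) = -\tfrac12|\partial G| - 1 + \tfrac12\sum_{i=1}^K |\partial G_i| + K.
\end{equation*}
So it remains to prove the purely combinatorial fact that $\sum_{i=1}^K |\partial G_i| - |\partial G| = 2|\C|$.

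To establish $\sum_i |\partial G_i| = |\partial G| + 2|\C|$, I would interpret each $|\partial G_i|$ (resp. $|\partial G|$) as the length of the bounding loop, i.e. the number of lattice edges on the boundary cycle, equivalently the number of (edge, incident-exterior-face) or (edge, incident-face) incidences of a suitable type. The cleanest route: an edge $e$ of $\mathcal{L}(\Z^2)$ contributes to the boundary-length of a simple grid graph $H$ precisely when $e \in E(H)$ and $e$ lies on exactly one face of $H$ (a boundary edge), and contributes $0$ otherwise; a short argument shows each boundary edge is traversed exactly once by the bounding loop. Now take any edge $e \in E(G)$. If $e$ is interior to $G$ (on two faces of $G$): if both faces lie in the same district it is interior there and on no district boundary, contributing $0$ to both sides; if the two faces lie in different districts then $e$ is a boundary edge of two districts, contributing $2$ to $\sum_i|\partial G_i|$, and — here is the point — such an $e$ is a cut edge, since two faces of $G$ in different districts forces the two endpoints of $e$... (this needs care: adjacency of faces vs. adjacency of vertices). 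This face-adjacency-versus-cut-edge correspondence is the step I expect to be the main obstacle, because one must verify that, for simple grid graphs partitioned into simple grid graphs, the edges lying between two differently-colored faces of $G$ are in natural bijection with the cut edges — this uses that each $G_i$ is itself simple (so has no "pinch points" and its boundary behaves well) and a local case analysis around each edge. If $e$ is a boundary edge of $G$, one checks it contributes $1$ to $|\partial G|$ and $1$ to exactly one $|\partial G_i|$ (the district containing its unique incident face), contributing equally to both sides. Assembling these contributions edge-by-edge yields $\sum_i |\partial G_i| = |\partial G| + 2|\C|$, and substituting into the displayed equation gives $\text{Area}(G) - \sum_i \text{Area}(G_i) = |\C| + K - 1$, which rearranges to the claimed formula.
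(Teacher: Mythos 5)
Your opening decomposition (every face of $G$ is a face of exactly one district or of none, so $\text{Area}(G)-\sum_i\text{Area}(G_i)$ counts the ``straddling'' faces) is correct and is also the paper's starting point, and your telescoping via Proposition~\ref{P:threeint} is carried out correctly. But the combinatorial identity you reduce everything to, $\sum_i|\partial G_i|-|\partial G|=2|\C|$, is false, and with it the second half of the argument. Counterexample: let $G$ be the row of four unit faces (the $5\times 2$ vertex grid), $V_1$ the six vertices with $x\in\{0,1,2\}$ and $V_2$ the four vertices with $x\in\{3,4\}$, so $G_1$ is a $2$-by-$1$ rectangle and $G_2$ a $1$-by-$1$ square, both simple. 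Then $\sum_i|\partial G_i|=6+4=10=|\partial G|$, while $|\C|=2$, so your identity asserts $0=4$. (The proposition itself checks out: $4+2-1-(2+1)=2$.) Running your own telescoping backwards from the true statement shows the correct identity is $\sum_i|\partial G_i|-|\partial G|=2|\C|-4(K-1)$. The edge-by-edge accounting breaks for structural reasons: a cut edge has its two endpoints in different $V_i$'s, so it is not an edge of any district and contributes nothing to any $|\partial G_i|$ (in the example both cut edges even lie on $\partial G$, so your ``boundary edge of $G$ contributes $1$ to exactly one $|\partial G_i|$'' case also fails); and the case you lean on --- an interior edge of $G$ whose two incident faces lie in different districts --- is vacuous, because each endpoint of such an edge is a corner of both faces and would have to lie in two disjoint vertex classes. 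What actually inflates $\sum_i|\partial G_i|$ are district edges bordering faces of $G$ that belong to no district, and those are not cut edges. (Relatedly, your last sentence's algebra is off: $\text{Area}(G)-\sum_i\text{Area}(G_i)=|\C|+K-1$ would not rearrange to the stated formula, which needs $|\C|-K+1$ on the right.)

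The deeper issue is that the $-4(K-1)$ correction is a global connectivity term, so no purely local, per-edge classification can produce it. This is precisely where the paper's proof does its work: after the same reduction to showing $|\C|=|F|+K-1$, where $F$ is the set of faces of $G$ in no district, it selects $K-1$ cut edges $S$ inducing a spanning tree of the district quotient graph and then builds an explicit bijection between $F$ and $\C-S$, using that the faces of $F$, joined across edges of $\C-S$, form a forest whose components reach the exterior, and tracing paths that alternately mark one face and one cut edge. Some argument of this global type (or an Euler-characteristic count exploiting connectedness and simple-connectedness of $G$ and of each $G_i$) is what is missing from your proposal; as written, the main step is the part flagged as ``needs care,'' and the identity it is aimed at is not the right one.
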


Notice that $C_1$ is a constant that does not depend on the partition.   We will now apply Proposition~\ref{P:redistrict} by observing that Theorem~\ref{T:main} implies that for each $i$, there exists $b_i\in(\bm,4]$ such that $\tau(G_i) = (b_i)^{\text{Area}(G_i)}$.
If we substitute $\text{Area}(G_i)=\frac{\ln(\tau(G_i))}{\ln(b_i)}$ and use that $b_i\in(\bm,4]$, then we obtain the following immediately from Proposition~\ref{P:redistrict}:
\begin{equation}\label{E:boundss}
C_1 - \frac{1}{\ln(\mathfrak{b})} \cdot \ln\left( \prod \tau(G_i) \right)
\leq  |\C| \leq C_1 - \frac{1}{\ln(4)} \cdot \ln\left( \prod \tau(G_i) \right).
\end{equation}

Figure~\ref{F:bounds} contains the same data as Figure~\ref{F:scatter} (zoomed out in order to show the axes) with the upper and lower bounds of Equation~\ref{E:boundss} displayed as red lines.  The slopes of these red lines are $- \frac{1}{\ln(\mathfrak{b})}$ and $-\frac{1}{\ln(4)}$. Their common vertical intercept is $C_1=29^2 + 9 -1 = 849$.  Notice that all of the data points lie between the two red lines, even though the maps in this ensemble do not satisfy all of our hypotheses -- their districts are not all simple.

\begin{figure}[ht!]\centering
   \scalebox{.65}{\includegraphics{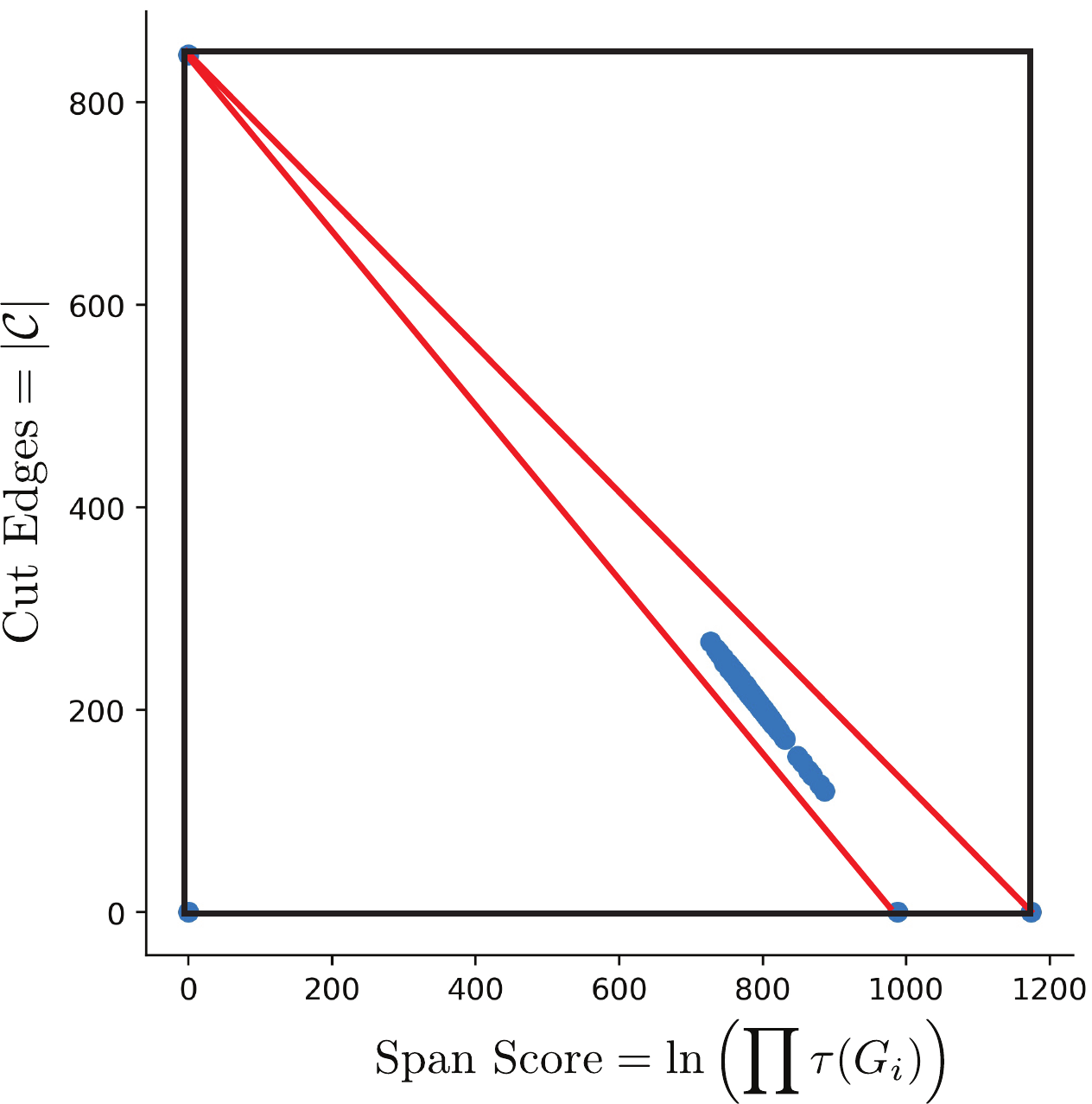}}
\caption{The red lines represent the inequalities of Equation~\ref{E:boundss}}\label{F:bounds}
   \end{figure}

\begin{proof}[Proof of Proposition~\ref{P:redistrict}]
Let $F$ denote the set of faces of $G$ that are not faces of any of the districts.  It will suffice to prove that $|\C|=|F|+K-1$.

It is possible to select a subset $S\subset\C$ of size $K-1$ such that $S$ connects the districts into a spanning tree.  More precisely, $S$ induces a spanning tree, $T_S$, on the \emph{district quotient graph} of $G$, which is defined to contain one vertex for each district, and to have an edge between each pair of vertices if the corresponding pair of districts is connected by at least one cut edge.  Figure~\ref{F:lazyriver} provides an illustration in which the districts are dark grey, the faces of $F$ are light grey, the members of $S$ are dashed red lines, and the members of $\C-S$ are solid blue lines.

It will suffice to find a bijection from $F$ to $\C-S$.  For this, we will consider $F$ as a graph in which a pair $f_1,f_2\in F$ are connected by an edge if they are adjacent across a member of $\C-S$.  Considered in this way, $F$ is acyclic because $T_S$ is connected.  Thus, $F$ is a union of disjoint trees.  We'll call $f\in F$ an \emph{end face} if it is adjacent across an edge of $\C-S$ with a face of $\mathcal{L}(\Z^2)$ that's not a face of $G$.  Since $T_S$ is acyclic, each connected component of $F$ contains at least one end face.

Imagine following a path in $F$ and marking the faces and cut edges crossed along the way.  Since faces and cut edges alternate, we can insure we mark an equal number of each by starting with a face and ending with a cut edge.  Let's call such a path a \emph{good path}.  To build a bijection of $F$ with $\C-S$, it will suffice to find a finite collection of good paths that together mark all of the faces in $F$ and all of the cut edges in $\C-S$.  This can be achieved by repeating the following two steps until all faces have been marked:
\begin{enumerate}
\item Select any face of $f\in F$ that hasn't yet been marked.
\item There exists a path in $F$ from $f$ to an end face.  Traverse this path (marking the faces and edges along the way) until either reaching this end face or reaching a previously marked face.
\end{enumerate}
In Figure~\ref{F:lazyriver}, one possible collection of good paths is illustrated in green.  When this algorithm terminates, all edges of $\C-S$ must be marked because any unmarked edge could be added to $S$ without creating a cycle in $T_S$.

\end{proof}

\begin{figure}[ht!]\centering
   \scalebox{2}{\includegraphics{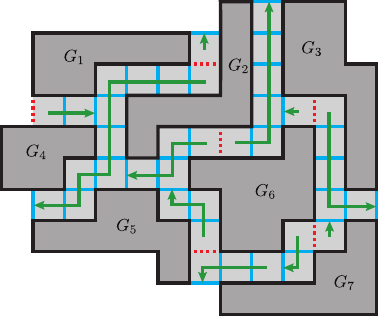}}
\caption{The green paths provide a bijection between $F$ (the light grey faces) and $\C-S$ (the blue cut edges).}\label{F:lazyriver}
   \end{figure}

We end by warning that $\text{Area}(G_i)$ does not equal the area of a geographic district because in our model $G_i$ denotes the \emph{dual graph} of a district.  Suppose we back our model up a step by first letting $G_i^*$ be a grid graph whose faces comprise the region enclosed in the $i^{\text{th}}$ geographic district, and then defined $G_i$ as its dual, so that the vertices of $G_i$ are the midpoints of the faces of $G_i^*$.  In this case, the boundary of each geographic district $G_i^*$ would run down the center of $F$.

\bibliographystyle{amsplain}

\end{document}